\theoremstyle{plain}
\newtheorem{theo}{Theorem}[section]
\newtheorem{prop}[theo]{Proposition}
\newtheorem{lemm}[theo]{Lemma}
\newtheorem{coro}[theo]{Corollary}
\newtheorem{defi}[theo]{Definition}
\theoremstyle{definition}
\newtheorem{remas}[theo]{Remark}
\DeclareSymbolFont{pSoitters}{OT1}{cmr}{m}{sl}
\DeclareMathSymbol{s}{\mathalpha}{pSoitters}{`s}
\def\eps{\varepsilon}
\def\mez{\frac{1}{2}}
\def\xC{\mathbf{C}}
\def\xN{\mathbf{N}}
\def\xR{\mathbf{R}}
\numberwithin{equation}{section}
\date{}
\title{ Domains of dependence for subelliptic wave equations and unique continuation for fractional powers of H\"ormander's operators}
  \author{Nicolas Burq}
  \address{ Universit\'e Paris-Saclay, Laboratoire de Math\'ematique d'Orsay, UMR 8628 du CNRS 91405 Orsay Cedex, France\\ and Institut universitaire de France}
  \email{nicolas.burq@universite-paris-saclay.fr}
  \author{Claude Zuily}
 \address{ Universit\'e Paris-Saclay, Laboratoire de Math\'ematique d'Orsay, UMR 8628 du CNRS 91405 Orsay Cedex, France}
 \email{claude.zuily@universite-paris-saclay.fr}
\begin{document}
  \begin{abstract} We prove the sharp domain of dependence property for solutions to subelliptic wave equations for sums of squares of vector fields satisfying H\"ormander bracket condition. We deduce a unique continuation property for the square root of subelliptic Laplace operators under an additional analyticity condition. Then, with a different, more involved  method, we prove the same result of unique continuation for more general $s$-powers ($0<s<1$).
  \end{abstract}
\maketitle
\section{Introduction and results}
In this note, we are interested in unique continuation properties of fractional powers of H\"ormander's subelliptic sum of squares operators.  In the first part of the paper,  we determine the sharp domain of dependence for solutions of  subelliptic wave equations in $\xR\times \xR^d$ with smooth coefficients, a result of independent interest. As an application we give, in the second part, a rather simple proof of  unique continuation for square roots of  of H\"ormander's operators  with analytic coefficients. Finally, in the third part, we consider the general case of $s$-powers of these  operators  ($0<s<1$) and we prove a similar result of unique continuation.

 Let $X= (X_1, \ldots, X_r)$ be a set of real vector fields  with $C^\infty$ coefficients on   $\xR^d$. 

Following H\"ormander we shall denote by $\mathcal{L}(X_1, \ldots, X_r)$ the Lie algebra generated by $X_1, \ldots, X_r$ and we shall  assume H\"ormander 's condition, that is,
\begin{equation}\label{lie}
  \mathcal{L}(X_1, \ldots, X_r) \text{ has maximal rank at every point of } \xR^d.
  \end{equation}
We shall consider the operator, $P = \sum_{j=1}^r X_j^*X_j$, 
where $X_j^*$ is the adjoint of $X_j$ and denote by $H^1_X$ the closure of $C^\infty_0 ( \mathbb{R}^d)$ for the norm 
$$ \| u \|^2_{H^1_X} = \sum _j \| X_j u \|_{L^2( \mathbb{R}^d)}^2$$
The quadratic form 
$$ Q(u,v) = \sum_j \bigl( X_j u , X_j v \bigr) _{L^2} + \bigl( u, v \bigr) _{L^2}$$ is continuous on $H^1_X$ and Riesz representation theorem shows that the operator $P + \text{Id}$ is an isometry from $H^1_X$ into its dual space $H^{-1}_X$ (which is a distribution space because $C^\infty_0$ is dense in $H^1_X$). 
We now recall the definition of the Friedrich's extension of $P$ (still denoted by $P$) with domain given by,
\begin{equation}\label{domP}
 D(P) = \{u \in H^1_X(\xR^d) : Pu \in L^2(\xR^d)\}.
 \end{equation}
 It is well known that the operator $P$ is non negative and selfadjoint (we refer to Section~\ref{sec.2} for more about Friedrich's extension and the solutions to wave equations)
 
 The functional calculus allows to define for any $U_0 =(u_0, u_1) \in H^1_X \times L^2$ 
$ u(t)= \cos (\sqrt{P}) u_0 + \frac{ \sin (\sqrt{P})}{\sqrt{P} } u_1$,
the solution to the wave equation with initial data $(u_0, u_1)$.  Or first result (sharp domain of dependence) is 

\begin{theo}\label{influ}
 Let $U_0= (u_0, u_1) \in  H^1_X \times L^2$ and consider the solution to the subelliptic wave equation $U= (u, \partial_t u)= S(t) (U_0) $ given by the functional calculus.  Let $d_P$ denote the sub-riemanian distance associated to the operator $P$ (see Section~\ref{sec.3} for more details), and 
 for $x_0 \in \xR^d, t_0>0, \rho>0 $,
 \begin{equation}\label{cone}
 \begin{aligned}
  &C(t_0) = \{(t,x) \in [0, t_0) \times \xR^{d}: d_P(x,x_0) < t_0-t\},\\
  &B_P(x_0,\rho) = \{ x \in  \xR^d: d_P(x,x_0) < \rho\}.
  \end{aligned}
  \end{equation}
 Assume that $u_0 =u_1 = 0$ in $B_P(x_0,t_0) $. Then   $u$  vanishes within the cone $C(t_0)$.
\end{theo}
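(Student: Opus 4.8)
The plan is to prove a genuine finite–propagation–speed estimate for the operator $\partial_t^2+P$ by the energy method, the sub-Riemannian geometry entering only through an eikonal inequality for $d_P$. Write $\psi(x)=d_P(x,x_0)$, so that $C(t_0)=\{(t,x):\psi(x)<t_0-t\}$ and its time-$t$ slice is $\{\psi<t_0-t\}=B_P(x_0,t_0-t)$. For the solution $u$ I introduce the local energy
\begin{equation*}
E(t)=\frac{1}{2}\int_{\{\psi<t_0-t\}}\Bigl(|\partial_t u|^2+\sum_{j=1}^r|X_j u|^2\Bigr)\,dx ,
\end{equation*}
and the goal is to show that $E$ is nonincreasing on $[0,t_0)$. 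Since $u_0=u_1=0$ on $B_P(x_0,t_0)$ we have $E(0)=0$, hence $E\equiv 0$, so $\partial_t u=0$ almost everywhere in $C(t_0)$; as the vertical segment below any point of $C(t_0)$ stays in $C(t_0)$, integrating $\partial_t u=0$ down to $t=0$ and using $u_0=0$ yields $u=0$ throughout $C(t_0)$.

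To estimate $E'(t)$ I use the equation in the form $\partial_t^2 u=-Pu=-\sum_j X_j^*X_j u$ together with $X_j^*=-X_j-\cn X_j$. Writing $X_j=\sum_k a_{jk}\partial_{x_k}$, a direct computation, in which the zeroth–order contributions $\cn X_j$ cancel, gives the local conservation law
\begin{equation*}
\partial_t e=\cn_x F,\qquad e=\tfrac12\Bigl(|\partial_t u|^2+\sum_j|X_j u|^2\Bigr),\qquad F^k=\RE\sum_j a_{jk}\,\overline{\partial_t u}\,X_j u .
\end{equation*}
Differentiating $E$ as an integral over the moving domain $\{\psi<t_0-t\}$ — equivalently, integrating the spacetime identity $\cn_{t,x}(e,-F)=0$ over the truncated cone $\{0\le t\le t_1\}\cap C(t_0)$ and reading off the top, bottom and lateral boundary terms — produces, besides $E(t_1)-E(0)$, a flux through the lateral boundary $\{t+\psi(x)=t_0\}$ whose integrand is proportional to $e-F\cdot\nabla\psi$.

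The decisive point is the sign of this lateral flux, which is exactly where the sub-Riemannian geometry enters. Because $d_P$ is $1$-Lipschitz for its own metric, the function $\psi$ satisfies the eikonal inequality $\sum_j(X_j\psi)^2\le 1$ (almost everywhere), i.e. its horizontal gradient is bounded by one. Since $F\cdot\nabla\psi=\RE\sum_j(X_j\psi)\,\overline{\partial_t u}\,X_j u$ involves $\psi$ only through these horizontal derivatives, Cauchy–Schwarz in $j$, followed by the eikonal inequality and the arithmetic–geometric mean inequality, gives
\begin{equation*}
|F\cdot\nabla\psi|\le|\partial_t u|\,\Bigl(\sum_j(X_j\psi)^2\Bigr)^{1/2}\Bigl(\sum_j|X_j u|^2\Bigr)^{1/2}\le|\partial_t u|\,\Bigl(\sum_j|X_j u|^2\Bigr)^{1/2}\le e ,
\end{equation*}
so that $e-F\cdot\nabla\psi\ge 0$ on the lateral boundary. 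Hence the lateral contribution has the favorable sign, $E(t_1)\le E(0)$, which is the sought monotonicity.

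The main obstacle is not this computation but its rigorous justification, and it is twofold. First, the solution furnished by the functional calculus lies only in $C(\xR;H^1_X)\cap C^1(\xR;L^2)$ with $\partial_t^2u=-Pu$ in $L^2$, so the integrations by parts are not literally licit; I would first prove the identity for the dense class of data in $D(P)\times H^1_X$, for which $u\in C^1(\xR;H^1_X)\cap C^2(\xR;L^2)$, and then pass to the limit, approximating the given data by such data vanishing on slightly smaller balls $B_P(x_0,t_0-\delta)$ and letting $\delta\to0$ so that the cones $C(t_0-\delta)$ exhaust $C(t_0)$. Second, and more seriously, the sub-Riemannian distance $\psi=d_P(\cdot,x_0)$ is in general only H\"older — not Lipschitz — for the Euclidean structure near $x_0$, so the full gradient $\nabla\psi$ and the level sets $\{\psi=c\}$ need not be regular and the divergence theorem on $C(t_0)$ is not immediate. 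I would handle this by regularization, replacing $\psi$ by smooth functions $\psi_\delta\to\psi$ with $\sum_j(X_j\psi_\delta)^2\le 1+o(1)$, running the energy argument on the smooth approximate cones, and passing to the limit. The structural fact that makes this work, and which I would emphasize, is precisely that only the bounded horizontal derivatives $X_j\psi$ enter the sign-definite quantity $e-F\cdot\nabla\psi$, whereas the ill-behaved full gradient $\nabla\psi$ appears only inside the normalization $\sqrt{1+|\nabla\psi|^2}$ multiplying a nonnegative surface density.
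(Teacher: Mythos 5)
Your route is genuinely different from the paper's. The paper never works on the sub-Riemannian cone directly: it regularizes the \emph{operator}, setting $P_k=P-\varepsilon_k\Delta_0$, applies the classical domain-of-dependence theorem to the elliptic wave operators $\partial_t^2+P_k$ with truncated smooth data, and passes to the limit using two ingredients: weak convergence of the approximate solutions to $S(t)U_0$ (via Hille--Yosida, a uniform energy bound and a careful choice of $\varepsilon_k$), and the Jerison--Sanchez-Calle result that $d_{P_k}\nearrow d_P$ uniformly on compact sets. You instead run the energy method directly on the cone of $d_P$, with the geometry entering only through the horizontal eikonal inequality $\sum_j(X_j\psi)^2\le1$ for $\psi=d_P(\cdot,x_0)$. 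Your local conservation law, the identification of the lateral flux as $e-F\cdot\nabla\psi$, and the observation that only the horizontal derivatives of $\psi$ enter the sign-definite quantity are all correct; this is an attractive and more intrinsic argument.

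There is, however, a genuine gap at the step you dispose of with ``handle this by regularization.'' For a H\"ormander system, $\psi=d_P(\cdot,x_0)$ is in general only H\"older continuous for the Euclidean structure, so $\nabla\psi$ need not exist almost everywhere; the inequality $\sum_j(X_j\psi)^2\le1$ a.e.\ is itself a nontrivial theorem about Carnot--Carath\'eodory Lipschitz functions (it does not follow from Rademacher), and the existence of smooth or Euclidean-Lipschitz approximations $\psi_\delta\to\psi$ locally uniformly with $\sum_j(X_j\psi_\delta)^2\le1+o(1)$ is precisely the hard sub-Riemannian input, which you assert rather than construct. The natural fix essentially reintroduces the paper's main tool: take $\psi_k=d_{P_k}(\cdot,x_0)$ with $P_k=P-\varepsilon_k\Delta_0$; these are Euclidean-Lipschitz (with constant of order $\varepsilon_k^{-1/2}$, which is harmless), satisfy the Riemannian eikonal inequality $\sum_j(X_j\psi_k)^2+\varepsilon_k|\nabla\psi_k|^2\le1$ a.e., and converge to $\psi$ monotonically and uniformly on compact sets by Jerison--Sanchez-Calle. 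Running your argument with $\psi_k$ in place of $\psi$ --- preferably in the weak form $\frac{d}{dt}\int\chi(\psi_k(x)+t)\,e(t,x)\,dx\le0$ for a smooth nonincreasing cutoff $\chi$, which avoids the divergence theorem on an irregular lateral boundary altogether --- and then letting $k\to\infty$ would close the proof. Until that approximation is supplied, the argument is incomplete at its decisive point.
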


Unique continuation for   square roots of   Laplacian with respect to a $C^\infty$ metric has been obtained by  Masuda \cite{Ma}.  For more general fractional powers of elliptic operators  see for instance, \cite{FF}, \cite{Ru}, \cite{Y} and \cite{GSU}.  
 As an application we prove,
 \begin{theo}\label{main}
Assume \eqref{lie} and that the coefficients of the vector fields $X_j$ are real analytic. Then, if $u_0 \in H_X^1(\xR^d)$  is such that $u_0 = \sqrt{P} u_0 = 0$ on an open  subset $\omega \subset  \xR^d $ then $u_0 = 0$ on $\xR^d$.
 \end{theo}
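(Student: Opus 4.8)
The plan is to deduce Theorem~\ref{main} from the domain of dependence property (Theorem~\ref{influ}) by first propagating the vanishing of the half-wave evolution, and then spreading it across all of $\xR^d$ by a Holmgren-type argument that exploits the analyticity of the coefficients. Since $u_0\in H^1_X=D(\sqrt P)$, the function $u_1:=\sqrt P\,u_0$ lies in $L^2$, and one may pick a ball $B_P(x_0,t_0)\subset\omega$ (such balls exist since, under \eqref{lie}, $d_P$ induces the Euclidean topology). On this ball both Cauchy data $(u_0,0)$ and $(0,u_1)$ vanish, so applying Theorem~\ref{influ} to each shows that the two solutions $\cos(t\sqrt P)u_0$ and $\frac{\sin(t\sqrt P)}{\sqrt P}u_1=\sin(t\sqrt P)u_0$ vanish in the cone $C(t_0)$. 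Hence so does the complex combination $F(t,x):=e^{it\sqrt P}u_0=\cos(t\sqrt P)u_0+i\sin(t\sqrt P)u_0$.

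The first key step is to upgrade this to vanishing on the whole cylinder $\xR_t\times B_P(x_0,t_0)$, and here I would use that $\sqrt P\ge0$. Because $|e^{it\lambda}|=e^{-\lambda\,\IM t}\le1$ for $\lambda\ge0$, the map $t\mapsto e^{it\sqrt P}u_0$ extends to a bounded holomorphic $L^2$-valued function on $\{\IM t>0\}$, continuous up to the real axis. For any $\chi\in C_0^\infty(B_P(x_0,t_0))$ the scalar function $t\mapsto (F(t,\cdot),\chi)_{L^2}$ is then bounded and holomorphic on $\{\IM t>0\}$ and, by the cone vanishing, is identically $0$ on a nonempty real interval. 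Since a bounded holomorphic function on a half-plane whose boundary values vanish on a set of positive measure vanishes identically, we get $(F(t,\cdot),\chi)_{L^2}\equiv0$ for every such $\chi$ and every $t$. Thus $F(t,\cdot)=0$ on $B_P(x_0,t_0)$ for all $t\in\xR$.

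Finally I would run an open--closed argument. Let $\Omega$ be the set of points near which $F$ vanishes for all $t$; it is open and, by the previous step, nonempty. The functions $\RE F$ and $\IM F$ solve the wave equation $\partial_t^2 F+PF=0$, whose coefficients are analytic. At a boundary point $x_*$ of $\Omega$ one has $F=0$ on $\xR_t\times W$ for a one-sided neighborhood $W$ of $x_*$. The principal symbol of $\partial_t^2+P$ is $-\tau^2+p(x,\xi)$ with $p=\sigma(P)\ge0$, which is \emph{characteristic} precisely in the codirections where the subelliptic symbol $p$ degenerates; nevertheless one can always render a hypersurface $\{\langle x-x_*,\nu\rangle=\alpha t\}$ non-characteristic by tilting it in time, choosing $\alpha\ne0$ with $\alpha^2\ne p(x_*,\nu)$ (possible since $p\ge0$). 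This tilt is available exactly because the vanishing now holds for all $t$. Holmgren's uniqueness theorem applied across this non-characteristic analytic hypersurface yields vanishing of $F$ on a full two-sided neighborhood of $(t,x_*)$, uniformly in $t$ by $t$-translation invariance of the operator, so $x_*\in\Omega$. Hence $\Omega=\xR^d$, and in particular $u_0=F(0,\cdot)=0$.

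The hard part is this last step. The subelliptic wave operator is characteristic in exactly the codirections responsible for the degeneracy of $P$, so a purely spatial Holmgren crossing is impossible; the resolution is to trade space for time, using the all-$t$ vanishing produced by the holomorphy step to tilt the Holmgren hypersurface into a non-characteristic position. This is also the step that genuinely forces the analyticity hypothesis, and the absence of both a wave equation and a positivity-driven holomorphy for general $s$-powers is what makes the different, more involved method of the third part necessary.
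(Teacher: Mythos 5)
Your first two steps reproduce, in essence, the paper's Masuda-type argument: the domain-of-dependence theorem kills $F(t,\cdot)=e^{it\sqrt P}u_0$ near $x_0$ for small real $t$, and the positivity of $\sqrt{P}$ gives a bounded holomorphic extension of $t\mapsto ( F(t,\cdot),\chi)$ to $\{\IM t>0\}$. Your appeal to the boundary-uniqueness theorem for bounded holomorphic functions on a half-plane is a legitimate variant of the paper's Schwarz-reflection/Morera argument, and even yields a slightly stronger intermediate conclusion (vanishing on a fixed ball for all real $t$, rather than for $|\RE z|<\delta$). Up to that point the proposal is sound.

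The third step, however, contains a genuine error, not merely a gap. Holmgren's theorem propagates vanishing across a non-characteristic hypersurface $\Sigma$ through a point $q$ only when the solution is already known to vanish on one side of $\Sigma$ near $q$. Your tilted hyperplane $\{\langle x-x_*,\nu\rangle=\alpha t\}$ with $\alpha\neq 0$ leaves the cylinder $\xR_t\times W$ on both sides near $(0,x_*)$: for $\alpha t>0$ the half-space $\{\langle x-x_*,\nu\rangle<\alpha t\}$ contains points with $\langle x-x_*,\nu\rangle>0$, hence outside $W$. So the one-sided hypothesis fails and the local theorem simply does not apply; at best one would need John's global (sweeping) version with a continuous family of uniformly non-characteristic hypersurfaces whose edges remain inside the cylinder, and you construct no such family. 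Worse, the step as written never uses the bracket condition \eqref{lie}, only analyticity and $p\geq 0$; applied to $P=\partial_{x_1}^*\partial_{x_1}=-\partial_{x_1}^2$ on $\xR^2_{x_1,x_2}$ it would assert that every solution of $\partial_t^2F-\partial_{x_1}^2F=0$ vanishing on $\xR_t\times\xR_{x_1}\times\{|x_2|<1\}$ vanishes identically, which is false (take $F=g(x_2)h(t,x_1)$ with $h$ a solution of the one-dimensional wave equation and $g$ a bump function vanishing on $(-1,1)$). Any correct propagation must see the brackets, and Holmgren-type arguments see only the characteristic variety. The paper circumvents this entirely: it keeps the full holomorphic extension, sets $v(\xi,\eta,x)=u(\xi+i\eta,x)$, and observes that harmonicity in $(\xi,\eta)$ together with $\partial_z^2u+Pu=0$ gives the degenerate elliptic equation $(-\partial_\xi^2-2\partial_\eta^2+P)v=0$ in $d+2$ variables, a sum of squares of analytic vector fields still satisfying H\"ormander's condition. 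Bony's unique continuation theorem --- which is exactly where the brackets and the analyticity enter --- then propagates the vanishing of $v$ from $\{|\xi|<\delta,\ \eta>0\}\times\omega_1$ to all of $\xR^d$ in $x$, and $u_0=v(0,0,\cdot)=0$ by continuity. You would need to replace your Holmgren step by an argument of this type (or by a genuine time-dependent analogue of Bony's theorem, which you would then have to prove).
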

Eventually we have the following more general result.
\begin{theo}\label{ucHo-s}
 Assume \eqref{lie} and that the coefficients of the vector fields $X_j$ are real analytic. Let $0<s<1$. 
 If $u_0\in D( P ^s) $ is such that $u_0 =P^s u_0 =0$ vanish on an open  subset $\omega \subset  \xR^d $ then $u_0 = 0$ on $\xR^d$.
\end{theo}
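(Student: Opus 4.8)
The plan is to realise $P^s$ through the Caffarelli--Silvestre / Stinga--Torrea extension and to reduce the statement to a unique continuation property for the associated degenerate elliptic operator in one extra variable. Set $a=1-2s\in(-1,1)$ and, for $u_0\in D(P^s)$, let $U(x,y)$ be the extension on $\xR^d\times(0,\infty)$ defined by the functional calculus of $P$ (a Bessel-type function of $y\sqrt P$ applied to $u_0$): it is the unique decaying solution of
\begin{equation}
\partial_y^2 U+\frac{a}{y}\,\partial_y U=P_x U,\qquad y>0,\qquad U(\cdot,0)=u_0,
\end{equation}
and it satisfies the trace identity $-c_s\lim_{y\to 0^+}y^a\partial_y U(\cdot,y)=P^s u_0$ for some constant $c_s>0$. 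The two hypotheses $u_0=0$ and $P^s u_0=0$ on $\omega$ then translate exactly into the vanishing of both the Dirichlet trace $U(\cdot,0)$ and the weighted conormal trace $y^a\partial_y U$ on $\omega\times\{0\}$.

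First I would put the equation in divergence form, $-\partial_y(y^a\partial_y U)+y^a\sum_j X_j^*X_j U=0$ in the weak sense, so that the hypotheses read $U(\cdot,0)=0$ and $\lim_{y\to 0^+}y^a\partial_y U=0$ on $\omega$, i.e. the Cauchy data $(U,\,y^a\partial_y U)$ vanish on $\omega\times\{0\}$. The vanishing of the weighted conormal trace is exactly the compatibility condition under which the even reflection $\tilde U(x,y):=U(x,|y|)$ is a weak solution of the same degenerate equation, now with weight $|y|^a$, in a full two-sided neighbourhood $\mathcal N$ of $\omega\times\{0\}$; the Dirichlet hypothesis then says that $\tilde U$ vanishes on $\omega\times\{0\}$. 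The problem is thereby reduced to an interior unique continuation statement: a solution of a degenerate elliptic equation that vanishes, together with its conormal derivative, along a piece of the hypersurface $\{y=0\}$ must vanish near it.

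The heart of the matter is this unique continuation across $\{y=0\}$ for the operator $\mathcal L=\partial_y^2+\tfrac{a}{y}\partial_y-P_x$, and I expect it to be the main obstacle. Two features make it delicate. First, the coefficient $|y|^a$ of $\partial_y^2$ degenerates on $\{y=0\}$, so the hypersurface is characteristic and classical Holmgren/Cauchy uniqueness does not apply directly; a Carleman estimate with a weight adapted simultaneously to the $|y|^a$ degeneracy and to the sub-Riemannian geometry of $X$ is needed. Second, and more seriously, the part of $\mathcal L$ acting in $x$ is the subelliptic operator $P_x$, for which unique continuation fails in general. This is precisely where real analyticity of the coefficients of the $X_j$ is indispensable: the strategy is to work in the analytic category, where away from $\{y=0\}$ the operator $\mathcal L$ is elliptic in $y$ and subelliptic in $x$ with analytic coefficients, and to control the propagation of the analytic wave-front set (equivalently, to run an FBI/analytic-Carleman argument) so as to upgrade the vanishing of the Cauchy data into vanishing on an open subset of $\mathcal N$. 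Making the sub-Riemannian Carleman weight compatible with the degenerate normal direction is exactly what makes this method `more involved' than the $s=\mez$ argument, which rests only on the finite speed of propagation of Theorem~\ref{influ}.

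Finally, once $\tilde U$ is known to vanish on an open subset of $\mathcal N$, I would propagate the vanishing: covering $\xR^d$ by overlapping sub-Riemannian balls and iterating the same analytic unique continuation for $\mathcal L$ in the region $\{y>0\}$ (where it is non-degenerate), connectedness of $\xR^d$ forces $U\equiv 0$ on all of $\xR^d\times(0,\infty)$. Restricting to $y=0$ then yields $u_0=U(\cdot,0)=0$ on $\xR^d$, which is the assertion.
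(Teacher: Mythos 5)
Your set-up is the same as the paper's: you realise $P^s$ via the extension problem $\partial_y^2U+\frac{1-2s}{y}\partial_yU=PU$, $U(\cdot,0)=u_0$, with the weighted trace $y^{1-2s}\partial_yU\to c_sP^su_0$ (the paper quotes this as the Chamorro--Jarrin theorem and reproves it in the appendix), and your final globalisation step (vanishing on an open set of $\{y>0\}$ implies vanishing everywhere, by analytic unique continuation for the sum-of-squares operator) is exactly the paper's appeal to Bony's theorem. But the central step --- getting from vanishing Cauchy data on $\omega\times\{0\}$ to vanishing on an open subset of $\{y>0\}$ --- is precisely where your proposal stops being a proof. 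You name it as ``the main obstacle'' and propose a Carleman/FBI estimate ``adapted simultaneously to the $|y|^a$ degeneracy and to the sub-Riemannian geometry of $X$'', but you do not produce such an estimate, and there is good reason to doubt that this route works: Carleman-type (stable) uniqueness fails in general for sums of squares even with smooth coefficients (Bahouri's counterexamples, cited in the paper), and the hypersurface $\{y=0\}$ is characteristic for the degenerate operator, so no off-the-shelf analytic-Carleman or wave-front-set propagation argument applies. The even reflection $U(x,|y|)$ is also not obviously licit at the stated regularity, and in any case it does not by itself advance the argument.

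The paper's resolution of this step is different in kind and is the idea you are missing: it is Fuchsian, not Carleman. Setting $v=t^{1-2s}u$ (with $t$ the extension variable), the extension equation becomes the Fuchs-type equation
\begin{equation*}
t^2\partial_t^2v+(2s-1)\,t\,\partial_tv-(2s-1)v+t^2Pv=0,
\end{equation*}
and the two hypotheses on $\omega$ are upgraded, by elementary Taylor-type manipulations on $A=t^{1-2s}\partial_tu$ and $B=t^{-2s}u$, to $v=\partial_tv=\partial_t^2v=0$ at $t=0$ in $\mathcal{D}'(\omega)$. One then invokes the Baouendi--Goulaouic uniqueness theorem for the characteristic Cauchy problem for Fuchsian operators $t^2\partial_t^2+a_1t\partial_t+a_0+t^2P(x,\partial_x)$, which is a Holmgren-type result requiring only \emph{analyticity} of the coefficients of $P$ in $x$ --- no ellipticity, no subellipticity, and no Carleman estimate --- with the indicial roots $\lambda_1=1$, $\lambda_2=1-2s$ both of real part $<h=2$. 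This yields the vanishing of $u$ on a set $]0,\delta[\times\omega_1$, after which Bony's theorem finishes the argument as you describe. So: correct scaffolding at both ends, but the load-bearing middle step is asserted rather than proved, and the tool you reach for (a sub-Riemannian Carleman weight across a characteristic, degenerate hypersurface) is not known to exist; the Fuchsian reduction plus Baouendi--Goulaouic is what actually closes the gap.
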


 Notice that, concerning the unique continuation,   for our operators,  the   case of $C^\infty$ vector fields remain an open question due to the lack of stable uniqueness for the operators "sum  of squares"(see~\cite{Ba}).

The plan of this note is the following.

 In the following section we first recall some basic facts about Friedrich' extension for sum of squares of vector fields and set up the framework allowing to solve wave equations (Hille Yosida's theorem). We then prove the sharp domain of dependence  property for solutions to subelliptic wave equations for sums of squares of vector fields satisfying H\"ormander bracket condition in Section~\ref{sec.3}.  In the next  section we prove the unique continuation theorem for the square root of H\"ormander's operators, relying on a strategy by Masuda and a uniqueness result by Bony (this is for this last result that we need the analyticity assumption). In the last section we extend the result on unique continuation for more general fractional powers relying on works by Chamorro-Jarrin,    Baouendi-Goulaouic and again Bony.
 
 {\bf Acknowledgements}  The research of the first author has received funding from the European Research Council (ERC) under the
European Union's Horizon 2020
research and innovation programme (Grant agreement 101097172 - GEOEDP). We would also like to thank C. Letrouit for enlightenments about R. Melrose's work~\cite{Mel}
 \section{Friedrich's extension, Hille-Yosida}\label{sec.2}
\subsection{Friedrich's extension}Let $X= (X_1, \ldots, X_r)$ be a set of real vector fields  with $C^\infty$ coefficients on   $\xR^d$ satisfying condition \eqref{lie}. 
We shall consider the operator,
\begin{equation}\label{opHo}
P = \sum_{j=1}^r X_j^*X_j
\end{equation}
where $X_j^*$ is the adjoint of $X_j$, with domain $C_0^\infty(\xR^d) \subset L^2(\xR^d)$.

This operator being symmetric and positive on his domain, we shall still denote by $P$ its Friedrichs self adjoint extension. 
\begin{defi}\label{H1X}
We  shall denote by  $H^1_X(\xR^d)$  the closure of  $C_0^\infty(\xR^d)$ for the norm,
$$    \Vert v\Vert^2_1 = \sum_{j=1}^r \Vert X_j v \Vert^2_{L^2(\xR^d)} + \Vert u \Vert^2_{L^2(\xR^d)}.$$
\end{defi}
Then the domain of $P$ is given by,
\begin{equation}
 D(P) = \{u \in H^1_X(\xR^d) : Pu \in L^2(\xR^d)\}.
 \end{equation}
 where, here, $P$ acts in the distribution sense.
 
Since $C_0^\infty(\xR^d)$ is, by definition, dense in $H^1_X(\xR^d)$ we may define,
$$H^{-1}_X(\xR^d) = \big(H^1_X(\xR^d)\big)', $$
the dual of $H^1_X(\xR^d)$ as a space of distributions.

It is then easy to see that $P$ sends $H^1_X(\xR^d)$ to $ H^{-1}_X(\xR^d)$ and that,
\begin{equation}\label{est-H-1}
 \Vert Pu \Vert_{H^{-1}_X(\xR^d)} \leq \Vert u\Vert_{\overline{H}^1_X},  \quad \forall u \in H^1_X(\xR^d).
 \end{equation}
 \subsection{The Hille Yosida theorem}
As in the sequel, we shall work with several operators with different domains, and in order to navigate between wave equations defined on these different domains, we need to have a unified framework to study these wave equations. So, rather than using the functional calculus for self adjoint operators, we shall  use the more flexible Hille Yosida theory below. Notice that (see Lemma~\ref{unic}) the functional calculus and Hille-Yosida theory give the same wave solutions (when the operator is self adjoint). 

Let us recall how Hille-Yosida theory applies to solve the Cauchy problem for the wave equations with respect the subelliptic operator $P$, 
\begin{equation}\label{CPW}
\begin{aligned}
&\partial_t^2 u  + Pu = 0  \quad \text { in } \xR \times \xR^d,\\
&u\arrowvert_{t=0} = u_0, \quad \partial_t u \arrowvert_{t=0} = u_1.
\end{aligned}
\end{equation}

 We consider the two operators, 
$$\mathcal{P} =\begin{pmatrix}0 & 1 \\ -P & 0 \end{pmatrix}, \quad \widetilde{\mathcal{P}} =\begin{pmatrix}0 & 1 \\ -P & 0 \end{pmatrix}, $$
acting respectively on, 
\begin{equation}\label{lesH}
 \mathcal{H} =H^1_X \times L^2,\quad \widetilde{\mathcal{H}} = L^2\times H^{-1}_X,  
 \end{equation}
with domains,
\begin{equation}\label{lesdom}
 D(\mathcal{P} )=  D(P)\times H^1_X, \quad D(\widetilde{\mathcal{P}} )= H^1_X\times L^2,
 \end{equation}
where $D(P)$ has been defined in \eqref{domP}.
Recall that these domains are actually characterized as follows:
$$ D(\mathcal{P} )= \{ U \in \mathcal{H} : \mathcal{P}_0 U \in \mathcal{H}\}, \quad  
   D(\widetilde{\mathcal{P} })= \{ U \in \widetilde{\mathcal{H}} : \mathcal{P}_0 U \in \widetilde{\mathcal{H}}\},$$ 
where $\mathcal{P}_0$ is the operator $\begin{pmatrix}0 & 1 \\ -P & 0 \end{pmatrix}$ acting on distributions. 

Then setting $U = (u,\partial_t u), U_0 = (u_0,u_1)$,  the problem \eqref{CPW} is equivalent to,
\begin{equation}\label{syst}
\partial_t U  + \mathcal{P}U = 0  \quad \text { in } \xR \times \xR^d,\quad U\arrowvert_{t=0} = U_0.
\end{equation}

Recall that an operator $T$, with domain $D(T)$ defined on a Hilbert $H$ is maximal dissipative (resp. accretive) if it satisfies, 
 $$  \Re (TU,U)_H\leq 0\,  \quad (\text{resp. } \geq 0), \quad  \forall U \in D(T),$$
 $$ \exists \lambda <0 \, \, (\text{resp. }   \lambda >0):  \forall U \in H \quad \exists V \in D(T) :  (T+ \lambda)V=U.$$

It is easy to see that the operators $\mathcal{P} -\text{Id}$ and $ \widetilde{\mathcal{P}} -\text{Id}$ are maximal dissipative (resp. $\mathcal{P} +\text{Id}$ and $ \widetilde{\mathcal{P}} +\text{Id}$ are maximal accretive).

 Consequently 
$\mathcal{P}$ and $ \widetilde{\mathcal{P}}$ are the generators of  strongly continuous groups of operators on $\mathcal{H}$,  and $\widetilde{\mathcal{H}}$, $S(t)$ and $\widetilde{S}(t)$. 
\begin{theo}[\cite{BuGe}, Th\'eor\`eme 2.10 and Proposition 2.24]  \label{BuGe}
Consider  a strongly continuous group of operators $\Sigma(t)$ on a Hilbert space $H$, and $A$ its infinitesimal generator with domain $D(A)$. Then, 
\begin{enumerate}
\item For any \,$ U_0\in D(A)$, the function $U: t\mapsto U(t) =\Sigma(t) U_0$  is the unique function in $  C^1 (\xR,  H) \cap  C^0 (\xR,  D(A))$ such that, 
$$ U(0) = U_0, \qquad \frac d {dt } U(t) = A U(t).$$
\item 
For any  $U_0 \in H$, the fonction $ U: t\mapsto U(t) =\Sigma(t) U_0$ is the unique    function  in $C^0(\xR,  H)$ such that,
 
$(i)$  $U(0) = U_0,$

$(ii)$ for any $\psi \in C^1_0 ( \xR)$ we have, 
$$ \int \psi(t) U(t) dt \in D( A), \text{ and } A \Bigl( \int \psi(t) U(t) dt \Bigr) = -  \int \psi'(t) U(t) dt, $$
(in other words, $U$ is a distribution in time with values in the domain of $A$ and it satisfies the equation $\partial_t U = A U $ as a distribution in time).
\end{enumerate}
\end{theo}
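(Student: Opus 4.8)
The statement is the standard well-posedness dictionary for a strongly continuous group, so the plan is to establish the two items by the classical semigroup arguments, taking real care only with the weak uniqueness in item~(2).

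For item~(1), existence is the basic differentiability property of $C_0$-groups: for $U_0\in D(A)$ I would write the difference quotient
\[
\frac{\Sigma(t+h)U_0-\Sigma(t)U_0}{h}=\Sigma(t)\,\frac{\Sigma(h)U_0-U_0}{h}=\frac{\Sigma(h)-\mathrm{Id}}{h}\,\Sigma(t)U_0,
\]
and let $h\to0$. The middle expression converges to $\Sigma(t)AU_0$ because $U_0\in D(A)$ and $\Sigma(t)$ is bounded; the right-hand expression then shows $\Sigma(t)U_0\in D(A)$ with $A\Sigma(t)U_0=\Sigma(t)AU_0$. Since $t\mapsto\Sigma(t)AU_0$ is continuous in $H$, one gets $U\in C^1(\xR,H)\cap C^0(\xR,D(A))$ solving $U'=AU$. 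For uniqueness I would fix $t$ and, given another solution $V$ in the same class, differentiate $s\mapsto\Sigma(t-s)V(s)$ on $[0,t]$; using $\frac{d}{ds}\Sigma(t-s)x=-\Sigma(t-s)Ax$ for $x\in D(A)$ together with $V(s)\in D(A)$ and $V'(s)=AV(s)$, the two contributions cancel, so this map is constant and $V(t)=\Sigma(t)V(0)=\Sigma(t)U_0$. The only point needing attention is the legitimacy of the product rule, which one justifies by splitting the difference quotient into a term controlled by the local growth bound $\|\Sigma(\tau)\|\le Me^{\omega|\tau|}$ times $\big(\tfrac{V(s+h)-V(s)}{h}-V'(s)\big)\to0$, plus a term handled by strong continuity.

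For item~(2), existence follows by testing the group solution $U(t)=\Sigma(t)U_0$ against $\psi$: setting $W=\int\psi(t)\Sigma(t)U_0\,dt$, a change of variables gives $\frac{\Sigma(h)-\mathrm{Id}}{h}W=\int\frac{\psi(t-h)-\psi(t)}{h}\,\Sigma(t)U_0\,dt$, whose limit as $h\to0$ is $-\int\psi'(t)\Sigma(t)U_0\,dt$ by dominated convergence on the compact support of $\psi$. Hence $W\in D(A)$ with the asserted value of $AW$, and $U(0)=U_0$ is trivial.

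The heart of the matter, and the step I expect to be the main obstacle, is uniqueness in item~(2): the candidate solution is only continuous in time and solves the equation merely in the distributional sense, so item~(1) cannot be applied directly. My plan is to regularize in time. By linearity it suffices to show that any $W\in C^0(\xR,H)$ with $W(0)=0$ satisfying~(ii) must vanish. Let $\rho_\eps$ be a mollifier and $W_\eps(\tau)=\int\rho_\eps(\tau-t)W(t)\,dt$. Choosing $\psi(t)=\rho_\eps(\tau-t)$ in~(ii) shows $W_\eps(\tau)\in D(A)$ and $AW_\eps(\tau)=\int\rho_\eps'(\tau-t)W(t)\,dt=W_\eps'(\tau)$, so $W_\eps\in C^1(\xR,H)\cap C^0(\xR,D(A))$ is a genuine strong solution of $W_\eps'=AW_\eps$. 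By the uniqueness already proved in item~(1), $W_\eps(\tau)=\Sigma(\tau)W_\eps(0)$. Letting $\eps\to0$, continuity of $W$ gives $W_\eps(\tau)\to W(\tau)$ and $W_\eps(0)\to W(0)=0$, while boundedness of $\Sigma(\tau)$ forces $\Sigma(\tau)W_\eps(0)\to0$; therefore $W\equiv0$ and $U(\tau)=\Sigma(\tau)U_0$. The delicate points to verify are that $\psi(t)=\rho_\eps(\tau-t)$ is an admissible $C^1_0$ test function and that the $H$-valued mollification identities $W_\eps'(\tau)=\int\rho_\eps'(\tau-t)W\,dt$ and $W_\eps(\tau)\to W(\tau)$ hold, both routine given continuity of $W$.
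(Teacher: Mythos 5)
Your proof is correct. Note, however, that the paper offers no proof of this statement to compare against: it is quoted verbatim from the lecture notes \cite{BuGe} (Th\'eor\`eme 2.10 and Proposition 2.24), and your argument is precisely the standard one found there --- item (1) by the generator calculus $A\Sigma(t)U_0=\Sigma(t)AU_0$ plus differentiating $s\mapsto \Sigma(t-s)V(s)$, and the weak uniqueness in item (2) by time mollification, where the choice $\psi(t)=\rho_\eps(\tau-t)$ in $(ii)$ upgrades the distributional solution to a strong one to which item (1) applies. All the delicate points you flag are genuine and correctly resolved; in particular $AW_\eps(\tau)=\int \rho_\eps'(\tau-t)W(t)\,dt$ gives the continuity of $W_\eps$ in the graph norm of $D(A)$ needed to invoke item (1), and the passage $\eps\to 0$ uses only the boundedness of $\Sigma(\tau)$ at fixed $\tau$.
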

Let us now check that when applying Theorem~\ref{BuGe} to $\mathcal{P}$, we can weaken the continuity assumption on $U$ for the uniqueness part in $(2)$, and assume only $U\in C^0(\xR, \widetilde{\mathcal{H}})$, while still assuming $U_0\in \mathcal{H}$.  Indeed, this follows directly from the following observation.

\begin{lemm}\label{unique2}
Let $U_0\in \mathcal{H}$.  According to \eqref{lesH} and \eqref{lesdom} we have,   $\mathcal{H} = D(\widetilde{\mathcal{P}})$. Applying (1) in Theorem \ref{BuGe} with $H = \widetilde{\mathcal{H}}, \, A = \widetilde{\mathcal{P}}$ and  (2) with $   H = \mathcal{H},\,  A = \mathcal{P}$ we obtain two solutions $\widetilde{U} (t)$ and $U(t)$. Then,
 these two solutions coincide i.e. we have $ \widetilde{U} (t) = U(t)$.
\end{lemm}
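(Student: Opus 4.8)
The plan is to show that the function $\widetilde{U}$ produced by part (1) of Theorem~\ref{BuGe} is itself an admissible competitor for the weak uniqueness statement in part (2), and then to invoke that uniqueness to conclude $\widetilde{U} = U$. First I would record what part (1) provides: since $U_0 \in \mathcal{H} = D(\widetilde{\mathcal{P}})$, applying (1) with $H = \widetilde{\mathcal{H}}$ and $A = \widetilde{\mathcal{P}}$ yields $\widetilde{U} \in C^1(\xR, \widetilde{\mathcal{H}}) \cap C^0(\xR, D(\widetilde{\mathcal{P}}))$, i.e.\ $\widetilde{U} \in C^0(\xR, \mathcal{H})$, with $\widetilde{U}(0) = U_0$ and $\frac{d}{dt}\widetilde{U}(t) = \widetilde{\mathcal{P}}\widetilde{U}(t)$ in $\widetilde{\mathcal{H}}$. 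In particular $\widetilde{U}$ has exactly the continuity, $C^0(\xR, \mathcal{H})$, required of the candidates in the uniqueness part (2) applied with $H = \mathcal{H}$ and $A = \mathcal{P}$.

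Next I would verify that $\widetilde{U}$ satisfies conditions (i) and (ii) of part (2) for the pair $(\mathcal{H}, \mathcal{P})$. Condition (i), $\widetilde{U}(0) = U_0$, is immediate. For (ii), fix $\psi \in C^1_0(\xR)$ and set $W = \int \psi(t)\widetilde{U}(t)\,dt$, a Bochner integral. Integrating the strong equation $\frac{d}{dt}\widetilde{U} = \widetilde{\mathcal{P}}\widetilde{U}$ against $\psi$ and integrating by parts in $t$ gives $\int \psi(t)\widetilde{\mathcal{P}}\widetilde{U}(t)\,dt = -\int\psi'(t)\widetilde{U}(t)\,dt$; since $\widetilde{\mathcal{P}}$ is a closed operator it commutes with the integral, so $W \in D(\widetilde{\mathcal{P}})$ and $\widetilde{\mathcal{P}} W = -\int\psi'(t)\widetilde{U}(t)\,dt$.

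The crucial step — and the only real obstacle — is to upgrade this from the large domain $D(\widetilde{\mathcal{P}})$ to the small domain $D(\mathcal{P})$. Here I would use that $\mathcal{P}$ and $\widetilde{\mathcal{P}}$ are both restrictions of the single distributional operator $\mathcal{P}_0 = \begin{pmatrix}0 & 1 \\ -P & 0\end{pmatrix}$, so that $\mathcal{P}_0 W = \widetilde{\mathcal{P}} W = -\int\psi'(t)\widetilde{U}(t)\,dt$. Because $\widetilde{U} \in C^0(\xR, \mathcal{H})$, both $W$ and $-\int\psi'(t)\widetilde{U}(t)\,dt$ lie in $\mathcal{H}$; hence $W \in \mathcal{H}$ with $\mathcal{P}_0 W \in \mathcal{H}$. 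By the domain characterization $D(\mathcal{P}) = \{V \in \mathcal{H} : \mathcal{P}_0 V \in \mathcal{H}\}$ recalled above, this forces $W \in D(\mathcal{P})$ and $\mathcal{P} W = -\int\psi'(t)\widetilde{U}(t)\,dt$, which is exactly condition (ii) for $(\mathcal{H}, \mathcal{P})$.

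Having verified (i) and (ii), the functions $\widetilde{U}$ and $U$ both belong to $C^0(\xR, \mathcal{H})$ and satisfy the hypotheses of the uniqueness statement in part (2) of Theorem~\ref{BuGe} with $H = \mathcal{H}$ and $A = \mathcal{P}$; uniqueness therefore yields $\widetilde{U}(t) = U(t)$ for all $t$. I expect the genuinely delicate point to be this passage to the smaller domain via $\mathcal{P}_0$ and the domain characterization, together with the routine but necessary justification that the closed operators commute with the Bochner integrals; the remainder is bookkeeping.
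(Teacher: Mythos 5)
Your proposal is correct and follows essentially the same route as the paper: apply part (1) to get the strong solution $\widetilde{U}\in C^1(\xR,\widetilde{\mathcal{H}})\cap C^0(\xR,\mathcal{H})$, integrate the equation against $\psi$ to get the identity for $\widetilde{\mathcal{P}}\bigl(\int\psi(t)\widetilde{U}(t)\,dt\bigr)$, and then use that $\mathcal{P}$ and $\widetilde{\mathcal{P}}$ are both restrictions of $\mathcal{P}_0$ together with the domain characterization $D(\mathcal{P})=\{V\in\mathcal{H}:\mathcal{P}_0V\in\mathcal{H}\}$ to conclude via the uniqueness in part (2). The identification of the domain-upgrade step as the crux matches the paper exactly.
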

\begin{proof}
Indeed, since $U_0\in D(\widetilde{\mathcal{P}})$, we have, 
$$ \widetilde{U} (t) \in C^1(\xR,  \widetilde{\mathcal{H}}) \cap C^0(\xR,   \mathcal{H}), $$
and it satisfies,
$$ \frac d {dt} \widetilde{U} (t) = \widetilde{\mathcal{P}} \widetilde{U}(t), \quad  \widetilde{U}(0) = U_0.$$  
This implies, 
$$  \widetilde{\mathcal{P}}\Big( \int \psi(t)\widetilde{U}(t) dt\Big) = \int \psi(t) \widetilde{U}'(t) dt = - \int \psi'(t) \widetilde{U}(t) dt,$$
where the equality holds in $\widetilde{\mathcal{H}}$. We deduce that in the distribution sense,
$$  {\mathcal{P}}_0\Big( \int \psi(t)\widetilde{U}(t) dt\Big) = \int \psi(t) \widetilde{U}'(t) dt  \in \mathcal{H} ,$$ which also implies by definition that, 
$$\int \psi(t)\widetilde{U}(t) dt\in D(\mathcal{P}).$$ This implies by the point (2) in Theorem~\ref{BuGe} that $\widetilde{U}(t)= S(t) U_0 = U(t)$.
\end{proof}
We end this section with an elementary lemma showing that Hille Yosida theory and the functional calculus of self adjoint operators allow to define the same solutions to wave equations
\begin{lemm}\label{unic}
Let $Q$ non negative  defined on $L^2( \mathbb{R}^d)$ with domain $D(Q)$ be self adjoint. Then the two solutions of wave equations given respectively by Hille-Yosida Theorem 
$$ U(t) U_0= S_A(t), A= \begin{pmatrix} 0 & 1\\ - Q & 0\end{pmatrix},$$ with 
$A$ defined on $D(Q^{1/2}) \times L^2$ with domain $D(Q) \times D(Q^{1/2})$ and the functional calculus of self adjoint operators
$$ V(t) = \begin{pmatrix} u(t), \partial_t u(t) \end{pmatrix}, \qquad u(t) = \cos( t\sqrt{Q}) u_0 + \frac{ \sin(t\sqrt{Q})} {\sqrt{p}} u_1$$ 
are equal.
\end{lemm}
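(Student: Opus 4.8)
The plan is to deduce the equality from the uniqueness clause of Theorem~\ref{BuGe}, by checking that the function furnished by the functional calculus is an honest Hille--Yosida solution. Throughout set $\mathcal{H} = D(Q^{1/2}) \times L^2$, let $A = \begin{pmatrix} 0 & 1 \\ -Q & 0 \end{pmatrix}$ with domain $D(A) = D(Q) \times D(Q^{1/2})$ (that $A$ generates a strongly continuous group follows from the same maximal dissipativity computation as for $\mathcal{P}$, and is anyway built into the statement through $S_A(t)$), and write $V(t) = (u(t), \partial_t u(t))$ with $u(t) = \cos(t\sqrt{Q})u_0 + \frac{\sin(t\sqrt{Q})}{\sqrt{Q}} u_1$.

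First I would treat smooth data $U_0 = (u_0, u_1) \in D(A)$, so that $u_0 \in D(Q)$ and $u_1 \in D(Q^{1/2})$. Using the spectral resolution $Q = \int_0^\infty \lambda\, dE_\lambda$, the operators $\cos(t\sqrt{Q})$ and $\frac{\sin(t\sqrt{Q})}{\sqrt{Q}}$ are given by the bounded symbols $\cos(t\sqrt{\lambda})$ and $\frac{\sin(t\sqrt{\lambda})}{\sqrt{\lambda}}$, so $u(t)$ is well defined in $L^2$. Differentiating under the spectral integral yields $\partial_t u = -\sqrt{Q}\sin(t\sqrt{Q})u_0 + \cos(t\sqrt{Q})u_1$ and $\partial_t^2 u = -Q\cos(t\sqrt{Q})u_0 - \sqrt{Q}\sin(t\sqrt{Q})u_1 = -Qu$; these identities are licit precisely because $u_0 \in D(Q)$ and $u_1 \in D(Q^{1/2})$ make the symbols $\lambda$ and $\sqrt{\lambda}$ square-integrable against the spectral measures $d\langle E_\lambda u_0, u_0\rangle$ and $d\langle E_\lambda u_1, u_1\rangle$. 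Hence $V(0) = (u_0, u_1) = U_0$ and $\frac{d}{dt}V(t) = (\partial_t u, -Qu) = AV(t)$.

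Next I would verify the regularity $V \in C^1(\mathbb{R}, \mathcal{H}) \cap C^0(\mathbb{R}, D(A))$ demanded by part (1) of Theorem~\ref{BuGe}. Via the isometry property of the spectral calculus this reduces to the strong continuity in $t$ of the families $\cos(t\sqrt{Q})$, $\frac{\sin(t\sqrt{Q})}{\sqrt{Q}}$, $\sqrt{Q}\sin(t\sqrt{Q})$ and $Q\cos(t\sqrt{Q})$ on the spaces where the data live, which in turn follows from Lebesgue's dominated convergence theorem applied to $\int_0^\infty |f(t,\lambda) - f(t_0,\lambda)|^2\, d\langle E_\lambda w, w\rangle$ with the obvious uniform dominating functions. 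All hypotheses of Theorem~\ref{BuGe}(1) being met, its uniqueness assertion forces $V(t) = S_A(t) U_0$, i.e. the two solutions coincide for every $U_0 \in D(A)$.

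Finally I would drop the smoothness of the data. For $t$ ranging over a compact interval, both $S_A(t)$ and the functional-calculus propagator $V(t)$ are bounded on $\mathcal{H}$ with a bound uniform in $t$ (for $S_A$ because it is a strongly continuous group, for the functional calculus because the matrix of symbols is bounded on $[0,T]\times[0,\infty)$); since $D(A)$ is dense in $\mathcal{H}$, the equality extends by continuity to all $U_0 \in \mathcal{H}$. I expect the genuine work to sit in the second step, namely matching the exact domains so that the termwise differentiation is justified and $V$ lands in $C^1(\mathbb{R},\mathcal{H}) \cap C^0(\mathbb{R}, D(A))$ rather than in a larger space; once the spectral calculus is set up, the remainder is bookkeeping.
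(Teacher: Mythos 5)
Your argument is correct, but it follows a different route from the paper. You first verify, for data $U_0\in D(A)=D(Q)\times D(Q^{1/2})$, that the functional-calculus flow $V(t)$ is a classical solution in $C^1(\xR,\mathcal{H})\cap C^0(\xR,D(A))$, invoke the uniqueness clause of part (1) of Theorem~\ref{BuGe}, and then extend to all of $\mathcal{H}=D(Q^{1/2})\times L^2$ by density of $D(A)$ together with the uniform boundedness on compact time intervals of both $S_A(t)$ and the spectral multipliers. The paper instead works with arbitrary $U_0\in\mathcal{H}$ from the start and checks the \emph{weak} characterization in part (2) of Theorem~\ref{BuGe}: it writes $u(t)$ and $\partial_t u(t)$ as $t$-derivatives of explicit antiderivatives (e.g.\ $u(t)=\frac{d}{dt}\bigl(\tfrac{\sin(t\sqrt Q)}{\sqrt Q}u_0-\tfrac{\cos(t\sqrt Q)}{Q}u_1\bigr)$) and integrates by parts against $\psi\in C^1_0(\xR)$; each integration by parts trades a time derivative for a factor $\lambda^{-1/2}$ in the spectral variable, which is exactly what places $\int\psi(t)u(t)\,dt$ in $D(Q)$ and $\int\psi(t)\partial_t u(t)\,dt$ in $D(Q^{1/2})$, so that the distributional identity $A\int\psi V\,dt=-\int\psi' V\,dt$ holds for rough data with no approximation step. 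Your approach is the more classical semigroup argument and makes the domain bookkeeping very transparent; the paper's buys a shorter proof by exploiting the weak formulation it had already set up (and which it needs anyway in Lemma~\ref{unique2} and in the limiting argument of Section~\ref{sec.3}), at the cost of manipulating the slightly delicate antiderivative $\tfrac{\cos(t\sqrt Q)}{Q}u_1$ near the bottom of the spectrum. Both correctly reduce the lemma to the uniqueness statement of Theorem~\ref{BuGe}.
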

\begin{proof}
It is clear that $t\mapsto V(t) \in C( \mathbb{R}; D(Q^{1/2}) \times L^2)$ satisfies conditions i) in Theorem~\ref{BuGe} and to conclude, it is enough to prove  that it satisfies also  conditions ii) in Theorem~\ref{BuGe}. 
We have since $u_0\in D(Q^{1/2})$ and $u_1 \in L^2$,
\begin{multline}\label{int1}
\int \psi(t) u(t) dt =  \int \psi(t) \cos( t\sqrt{Q}) u_0 + \frac{ \sin( t\sqrt{Q})}{\sqrt{Q}}  u_1 dt \\
=  \int \psi(t)  \frac{d} {dt} \bigl( \frac{\sin( t\sqrt{Q}) }{\sqrt{Q}} u_0 - \frac{\cos( t\sqrt{Q}) }{{Q}} u_1 \bigr) dt 
 = - \int \psi' (t)  \bigl( \frac{\sin( t\sqrt{Q}) }{\sqrt{Q}} u_0 - \frac{\cos( t\sqrt{Q}) }{{Q}} u_1 \bigr) dt \in D(Q). 
 \end{multline}
Similarly 
\begin{multline}\label{int2}
 \int \psi(t) \partial_t u(t) dt = \int \psi(t) { \cos( t\sqrt{Q})} u_1 -\sqrt{Q} \sin( t\sqrt{Q}) u_0  dt \\
 =  \int \psi(t)  \frac{d} {dt} \bigl( {\cos( t\sqrt{Q}) } u_0 +  \frac{\sin( t\sqrt{Q}) }{\sqrt{Q}} u_1 \bigr) dt 
 = - \int \psi' (t)  \bigl(   {\cos( t\sqrt{Q}) } u_0 \frac{\sin( t\sqrt{Q}) }{\sqrt{Q}} u_1\bigr) dt \in D(Q^{1/2}). 
 \end{multline}
 and from~\eqref{int1}, \eqref{int2} we deduce 
 $$ \begin{pmatrix} 0 & 1\\ - Q & 0\end{pmatrix} \int \psi(t) \begin{pmatrix} u(t)\\ \partial_t u(t) \end{pmatrix}  dt = - \int \psi'(t) \begin{pmatrix} u(t)\\ \partial_tu(t) \end{pmatrix} dt 
 $$ 
 \end{proof}
 \section{Domains of dependence}
\label{sec.3}
\subsection{The metrics}
 We shall denote by $d_P$ the sub-Riemanian  distance attached to the operator $P$  as defined in \cite{J-SC} Proposition 3.1. It is defined  as follows. If $x,y \in \xR^d$  we set,
 \begin{align*}
  \mathcal{C}(x,y) = \{\sigma:[0,1]\to& \xR^d, \,  \sigma \text{ is a Lipschitz curve}, \\
  &\sigma(0) = x, \, \sigma(1) = y \text{ and }, 
    \dot{\sigma}(t) = \Sigma_{j=1}^r a_j(t) X_j(\sigma(t)) \text{ for a.e. } t.\}, 
  \end{align*}
 Then,
 \begin{equation}\label{dist-se}
  d_P(x,y) = \text{ inf } \Big\{  \Big(\sum_{j=1}^r \int_0^1 \vert a_j(t)\vert^2\, dt\Big)^\mez, \quad \sigma \in \mathcal{C}(x,y)\Big\}.
  \end{equation}
 Notice that if we denote by $\Delta_0$ the flat Laplacian in $\xR^d$ then $d_{\Delta_0}(x,y) = \vert x-y\vert$ (the Euclidian distance).
 \subsection{Domains of dependence}
 The main result of this section  concerns the domain of dependence of the solution  of  the subelliptic wave problem,
 \begin{equation}\label{sewe}
 \begin{aligned}
 &(\partial_t^2 + P ) u = 0 \text{ in } \xR\times \xR^d,\\
 &u \arrowvert_{t=0} = u_0, \quad \partial_t u \arrowvert_{t=0} = u_1.
 \end{aligned}
 \end{equation}

We first begin with a corollary of Theorem~\ref{influ}
 
\begin{coro}\label{coro1}
 Let $U_0= (u_0, u_1) \in (H^1\cap  H^1_X) \times L^2$ and consider the solution to the subelliptic wave equation $U= (u, \partial_t u)= S(t) U_0$ given by the Hille-Yosida theorem. Assume that $u_0 =u_1 = 0$ in a ball $\{x \in \xR^d: \vert x-x_0\vert<r_0\}$.  Then there exists $\eta>0$ and $r>0$ such that $u= 0$ in the set $\{(t,x): 0\leq t< \eta, \vert x-x_0\vert< r\}$.
\end{coro}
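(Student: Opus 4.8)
The plan is to deduce the corollary from Theorem~\ref{influ} by comparing, near $x_0$, the Euclidean distance with the sub-Riemannian distance $d_P$. First I note that $(H^1\cap H^1_X)\times L^2\subset H^1_X\times L^2$, so $U_0$ is admissible for Theorem~\ref{influ}, and that by Lemma~\ref{unic}, $P$ being self adjoint and non negative, the Hille--Yosida flow $S(t)U_0$ coincides with the solution given by the functional calculus; hence Theorem~\ref{influ} applies to the present $u$. I will then need two metric comparisons: (i) that sufficiently small sub-Riemannian balls lie inside the given Euclidean ball, which feeds the hypothesis into Theorem~\ref{influ}; and (ii) that sufficiently small Euclidean balls are sub-Riemannian small, which extracts a Euclidean neighbourhood from the sub-Riemannian cone produced by the theorem.

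For (i), I would fix the compact set $K=\{|x-x_0|\le r_0\}$ and $M=\sup_{z\in K}\max_j|X_j(z)|<\infty$, finite since the fields are $C^\infty$. For any admissible curve $\sigma$ with control $a=(a_1,\dots,a_r)$ that stays in $K$ one has $|\dot\sigma(t)|\le C|a(t)|$ for a constant $C$ depending only on $M$ and the number of fields (Cauchy--Schwarz in the sum over $j$), so the Euclidean length of $\sigma$ is at most $C\big(\int_0^1|a|^2\big)^{\mez}$. A standard continuity argument then shows that a curve issued from $x_0$ with control energy $<t_0^2$ and $Ct_0<r_0$ cannot reach $\partial K$, hence stays in $K$, so its endpoint obeys $|x-x_0|\le Ct_0<r_0$. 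Choosing $t_0<r_0/C$ gives $B_P(x_0,t_0)\subset\{|x-x_0|<r_0\}$; as $u_0=u_1=0$ on the latter, they vanish on $B_P(x_0,t_0)$, and Theorem~\ref{influ} yields $u=0$ on $C(t_0)=\{(t,x):0\le t<t_0,\ d_P(x,x_0)<t_0-t\}$.

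For (ii), I would invoke H\"ormander's condition \eqref{lie}: by Chow--Rashevskii (see \cite{J-SC}) the distance $d_P$ is finite and continuous for the Euclidean topology, with $d_P(x_0,x_0)=0$. Thus $x\mapsto d_P(x,x_0)$ is continuous and vanishes at $x_0$, so on setting $\eta=t_0/2$ I can pick $r>0$ with $d_P(x,x_0)<t_0/2$ whenever $|x-x_0|<r$. Then for $0\le t<\eta$ and $|x-x_0|<r$ one has $t<t_0$ and $d_P(x,x_0)<t_0/2=t_0-\eta\le t_0-t$, i.e. $(t,x)\in C(t_0)$, so $u(t,x)=0$. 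This is exactly the asserted vanishing, with these $\eta$ and $r$.

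The one substantial ingredient is the distance comparison, and it is asymmetric. Inclusion (i) is elementary and uses only the local boundedness of the fields, whereas (ii) --- that Euclidean-small sets already sit inside the sub-Riemannian cone --- rests crucially on the bracket-generating hypothesis \eqref{lie}, via the continuity of $d_P$ (the quantitative form being the ball--box estimate). I expect this asymmetry, and in particular verifying that $d_P$ genuinely induces the Euclidean topology under \eqref{lie}, to be the point requiring the most care.
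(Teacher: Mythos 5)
Your proof is correct and follows essentially the same route as the paper: a two-sided comparison of Euclidean and sub-Riemannian balls near $x_0$ (small $d_P$-balls sit inside the given Euclidean ball via local boundedness of the fields; Euclidean-small implies $d_P$-small via the bracket condition), followed by an application of Theorem~\ref{influ}. The only differences are cosmetic --- you use the qualitative continuity of $d_P$ where the paper cites the quantitative Fefferman--Phong inclusion $B_{\Delta_0}(x_0,\rho)\subset B_P(x_0,M\rho^\delta)$, and you make explicit (via Lemma~\ref{unic}) the identification of the Hille--Yosida and functional-calculus solutions, a point the paper leaves implicit.
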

\begin{proof}[Proof of the Corollary]
Indeed, notice that for every compact $K \subset \xR^d$ there exist $M>0, \rho_0>0$ such that  for every $x_0\in K$   we have $B_P(x_0, \rho) \subset B_{\Delta_0}(x_0, M\rho)$ for every $0<\rho<\rho_0$.  
 
 On the other side,  according to \cite{F-P},  the hypothesis \eqref{lie} implies that for every compact $K \subset \xR^d$ there exist  $M>0, \rho_0>0, \delta>0$ such that $B_{\Delta_0}(x_0, \rho) \subset B_P(x_0, M \rho^\delta)$ for every $x_0 \in K$.
 
 Then Corollary \ref{coro1} follows easily from Theorem \ref{influ}.
 \end{proof}
 \begin{remas}
 \ 
 
\begin{enumerate}
\item  In the case where $\xR^d$ is replaced by a compact manifold Theorem \ref{influ} appears in a work of Melrose~\cite{Mel}. Our proof below applies also to this setting and hence gives a fairly self contained (and different) new proof.

\item In the case where the vector fields $(X_1, \ldots, X_r)$ are of Heisenberg type, Theorem \ref{influ} has been proved by the second author in \cite{Zu}.
\end{enumerate}
\end{remas}
\begin{proof}[Proof of Theorem~\ref{influ}]
To prove this result, the idea is to solve first the (elliptic) wave equation associated to the family of operators $P_k = P - \eps_k \Delta_0$, where $\Delta_0$ is the flat Laplacian, with initial data a sequence  of smooth initial data $(u_{0,k} , u_{1,k}) $ converging to $(u_0, u_1) $ in $H^1_X\times L^2$, with a proper choice of $ \eps_k$. Setting $X_ k = (X_1, \ldots, X_r, \sqrt{ \eps_k} \partial_1, \ldots, \sqrt{ \eps_k} \partial_d)$ we notice that $H^1_{X_ k} = H^1 \cap  H^1_X$ and hence it is possible to define such solution. Then we conclude by using the standard dependency domain results for elliptic wave equations and a limiting process. 

In all that follows we take  $(u_0,u_1) \in H^1_X(\xR^d) \times L^2(\xR^d)$ such that,
\begin{equation}\label{u0u1}
 u_0 =  u_1 = 0 \text{ in } B_P(x_0,t_0).
 \end{equation}

We start with the choice of sequences $(u_{0,k} , u_{1,k}) $ and $\eps_k$. From the definition of $H^1_X$, there exists $u_{0,k}\in C^\infty_0 ( \mathbb{R}^d)$ converging to $u_0$ as $k\rightarrow + \infty$ in $H^1_X$. There exists also a sequence $(u_{1,k})\subset C^\infty_0 (\mathbb{R}^d)$ converging to $u_1$ in $L^2( \mathbb{R}^d)$.  We shall set, 
\begin{equation}\label{choix}
 \eps_k = k^{-1} \bigl(1+ \max_{1\leq j\leq k} \| u_{0,j}\|_{H^1}^2\bigr) ^{-2}, \quad P_k = P - \eps_k \Delta_0.
 \end{equation}
Notice that the sequence $\eps_k$ is nonincreasing and
$$ \eps_k \|  u_{0,k}\|_{H^1}^2 \rightarrow 0 \text{ when } k \rightarrow + \infty.$$
 Let $d_{P_{k}}$ be the Riemanian metric attached to this operator, defined as in \eqref{dist-se}  and introduce the sets, 
\begin{equation}\label{conek}
 \begin{aligned}
  &C_k(t_0) = \{(t,x) \in [0, t_0) \times \xR^{d}: d_{P_k}(x,x_0) < t_0-t\},\\
  &B_{P_k}(x_0,\rho) = \{ x \in  \xR^d: d_{P_k}(x,x_0) < \rho\}.
  \end{aligned}
  \end{equation}
Then we have,
\begin{prop} (\cite{J-SC}, Proposition 3.1)\label{prop-limite}
Let $x_0 \in \xR^d$ then for every $x  \in \xR^d$ we have,
\begin{enumerate}
\item The squence $k \mapsto d_{P_k}(x, x_0)$ is non decreasing and bounded by $d_{P}(x, x_0)$
\item $\lim_{k \to +\infty}  d_{P_{k}}(x,x_0) =  d_P (x,x_0)$.
\item  The convergence is uniform on every compact.
\end{enumerate}
\end{prop}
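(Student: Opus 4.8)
The statement concerns the control distances attached to $P_k = P - \eps_k\Delta_0 = \sum_{j=1}^r X_j^*X_j + \sum_{i=1}^d(\sqrt{\eps_k}\,\partial_i)^*(\sqrt{\eps_k}\,\partial_i)$: by the recipe \eqref{dist-se}, $d_{P_k}(x,y)$ is the infimum of $\bigl(\sum_j\int_0^1|a_j|^2\,dt+\sum_i\int_0^1|c_i|^2\,dt\bigr)^{\mez}$ over Lipschitz curves $\sigma:[0,1]\to\xR^d$ with $\sigma(0)=x$, $\sigma(1)=y$ and $\dot\sigma=\sum_j a_j X_j(\sigma)+\sqrt{\eps_k}\sum_i c_i\,\partial_i$. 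I will write $E(\sigma)=\sum_j\int_0^1|a_j|^2+\sum_i\int_0^1|c_i|^2$ for the energy, so $d_{P_k}=\inf E^{\mez}$. The key structural remark is that $X_k=(X_1,\dots,X_r,\sqrt{\eps_k}\partial_1,\dots,\sqrt{\eps_k}\partial_d)$ spans the tangent space at every point, so each $d_{P_k}$ is a genuine Riemannian distance, whereas the limiting family spans only through brackets by \eqref{lie}.

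For (1) I would compare the admissible classes directly. Any curve admissible for $d_P$ uses $\dot\sigma=\sum_j a_jX_j(\sigma)$ and is admissible for $d_{P_k}$ with $c_i\equiv 0$ at the same energy, so $d_{P_k}\le d_P$. For monotonicity, since $\eps_{k+1}\le\eps_k$, take a curve admissible for index $k+1$ with controls $(a_j,c_i)$; setting $\tilde c_i=\sqrt{\eps_{k+1}/\eps_k}\,c_i$ one has $\sqrt{\eps_{k+1}}\,c_i\,\partial_i=\sqrt{\eps_k}\,\tilde c_i\,\partial_i$, so the same curve is admissible for index $k$ with energy $\sum_j\int|a_j|^2+(\eps_{k+1}/\eps_k)\sum_i\int|c_i|^2\le E(\sigma)$. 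Taking infima gives $d_{P_k}\le d_{P_{k+1}}$. Thus $k\mapsto d_{P_k}(x,x_0)$ is nondecreasing and bounded above by $d_P(x,x_0)$, so the limit $L(x):=\lim_k d_{P_k}(x,x_0)$ exists and satisfies $L(x)\le d_P(x,x_0)$.

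For (2) the remaining inequality $d_P(x,x_0)\le L(x)$ is the heart of the matter, and I would obtain it by a compactness ($\Gamma$-convergence) argument. Pick near-minimizers $\sigma_k$ for $d_{P_k}(x,x_0)$ with $E(\sigma_k)\le d_{P_k}(x,x_0)^2+1/k\le L(x)^2+1$, so the controls $a^{(k)}=(a^{(k)}_j)$ and $c^{(k)}=(c^{(k)}_i)$ are bounded in $L^2(0,1)$. A preliminary (and slightly delicate) point is that these curves stay in a fixed Euclidean ball: on $B(x_0,R)$ the $X_j$ are bounded, so Cauchy--Schwarz gives $|\sigma_k(t)-x_0|\le C_R\,E(\sigma_k)^{\mez}$, and a continuity/bootstrap argument fixes $R$ uniformly in $k$. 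On that ball $a^{(k)}\rightharpoonup a$ weakly in $L^2$ (subsequence), the elliptic velocity $\sqrt{\eps_k}\,c^{(k)}\to 0$ strongly in $L^2$ since $\eps_k\to 0$, and $(\sigma_k)$ is equicontinuous, so $\sigma_k\to\sigma$ uniformly by Arzel\`a--Ascoli. Passing to the limit in $\sigma_k(t)=x+\int_0^t\bigl(\sum_j a^{(k)}_jX_j(\sigma_k)+\sqrt{\eps_k}\sum_i c^{(k)}_i\partial_i\bigr)$, using $X_j(\sigma_k)\to X_j(\sigma)$ uniformly against the weakly convergent $a^{(k)}_j$ and the vanishing of the elliptic term, yields $\dot\sigma=\sum_j a_jX_j(\sigma)$, i.e. $\sigma$ is admissible for $d_P$. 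Weak lower semicontinuity of the $L^2$ norm gives $\sum_j\int|a_j|^2\le\liminf_k\sum_j\int|a^{(k)}_j|^2\le L(x)^2$, hence $d_P(x,x_0)\le L(x)$.

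For (3) I would invoke Dini's theorem: each $x\mapsto d_{P_k}(x,x_0)$ is continuous (a Riemannian distance induces the Euclidean topology), the limit $x\mapsto d_P(x,x_0)$ is continuous (its continuity, indeed H\"older regularity, follows from the two-sided comparisons $B_P(x_0,\rho)\subset B_{\Delta_0}(x_0,M\rho)$ and $B_{\Delta_0}(x_0,\rho)\subset B_P(x_0,M\rho^\delta)$ recalled in the proof of Corollary~\ref{coro1}), and the convergence is monotone; a monotone sequence of continuous functions converging pointwise to a continuous limit on a compact set converges uniformly. The main obstacle is the $\Gamma$-convergence step in (2): keeping the near-optimal curves in a common compact set and passing to the limit in the nonlinear ODE $\dot\sigma_k=\sum_j a^{(k)}_jX_j(\sigma_k)$ with only weak $L^2$ compactness of the controls. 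The combination of strong (uniform) convergence of $X_j(\sigma_k)$ with weak convergence of $a^{(k)}_j$, together with lower semicontinuity of the energy, is precisely what makes the inequality point the right way.
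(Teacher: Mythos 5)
Your argument is correct, but for parts (1) and (2) it follows a genuinely different route from the paper: the paper does not reprove these points at all, it simply invokes \cite{J-SC}, Proposition 3.1, and only supplies the Dini argument for (3) (which you reproduce verbatim: monotone pointwise convergence of continuous functions to a continuous limit on a compact set is uniform). What your version buys is a self-contained proof. The rescaling $\tilde c_i=\sqrt{\eps_{k+1}/\eps_k}\,c_i$ for monotonicity and the inclusion of the $d_P$-admissible class (with $c_i\equiv0$) for the upper bound are exactly right, and the compactness step for the reverse inequality is sound: weak $L^2$ convergence of the horizontal controls against the uniformly convergent $X_j(\sigma_k)$, strong vanishing of $\sqrt{\eps_k}\,c^{(k)}$, and weak lower semicontinuity of the energy together force $d_P(x,x_0)\le L(x)$. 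The one point you rightly flag as ``slightly delicate'' is the confinement of the near-minimizers to a fixed Euclidean ball: the bootstrap $|\sigma_k(t)-x_0|\le C_R\,E(\sigma_k)^{\mez}$ is circular if the $X_j$ grow superlinearly at infinity, since $C_R$ then grows with $R$; to make it airtight you should either assume (as one may, since the proposition is only used locally, for $x$ in a fixed ball around $x_0$ and for the uniform convergence on compacts) that the analysis is localized, or truncate the vector fields outside a large ball, which changes neither $d_{P_k}$ nor $d_P$ near $x_0$ for the relevant range of distances. With that caveat addressed, your proof is a legitimate replacement for the citation.
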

\begin{proof} The non decreasing property and the pointwise convergence have been proved in \cite{J-SC}.   The uniformity is a consequence of Dini's theorem since   the sequence $k \mapsto d_{P_{k}}(x,x_0) $  is non decreasing  and the functions $x \mapsto  d_{P_{k}}(x,x_0)$ and  $x \mapsto  d_P (x,x_0)$ are continuous.
\end{proof}
Now fix $\delta>0$ small. By Proposition \ref{prop-limite} on can find $k_0\geq 1$ such that for $k \geq k_0$ and for $x$ in a large fixed ball around $x_0$,
$$ d_P(x,x_0) - \frac{\delta}{2} \leq d_{P_k}(x,x_0) \leq d_{P}(x,x_0).$$ 
It follows that,
\begin{equation}\label{inclu}
B_{P_k}(x_0, t_0- \delta) \subset B_P(x_0, t_0 - \frac{\delta}{2}),  \quad  B_{P_k}(x_0, t_0 - \delta ) \subset B_{P_k}(x_0, t_0 - \frac{\delta}{2} )\subset B_P(x_0, t_0 ).
\end{equation}

Let $\chi \in C^\infty(\xR^d)$ be such that, 
$$\chi(x) = 0  \text{ in }B_P\big( x_0, t_0- \frac{\delta}{2}\big), \quad \chi(x) = 1 \text{  in }  B_P(x_0, t_0)^c$$
and set,
$$v_{j,k}(x) = \chi(x) u_{j,k}(x), \quad j=0,1.$$
Then, according to \eqref{inclu}, 
\begin{equation}\label{vjk}
\begin{aligned}
  &v_{0,k} \in C_0^\infty(\xR^d),\quad v_{0,k} = 0 \text{ in } B_{P_k}(x_0, t_0-\delta) \quad \text{ and } (v_{0,k}) \to u_0 \text{ in } H^1_X(\xR^d).\\
 &v_{1,k} \in C_0^\infty(\xR^d),\quad v_{1,k} = 0 \text{ in } B_{P_k}(x_0, t_0-\delta) \quad \text{ and } (v_{1,k}) \to u_1 \text{ in } L^2(\xR^d).
 \end{aligned}
  \end{equation}
Now we set,
\begin{equation}\label{U0k}
U^0_{k} = (v_{0,k},v_{1,k}).
\end{equation}

 For the limiting process the key point is the following result.
 
  Let us set, with $ \eps_k \in (0,1) $  defined in \eqref{choix},
$$\mathcal{P}_ {k} =\begin{pmatrix}0 & 1 \\ -P +  \eps_k \Delta_0& 0 \end{pmatrix}.   $$
 \begin{prop}
 For $U_0= (u_{0}, u_1) \in \mathcal{H}:= (H^1\cap  H^1_X) \times L^2$,  consider  the solution,
 $$ U_k (t) = S_{k} (t) U^0_{k}  \in C^0(\xR, \mathcal{H})$$ 
 given by Theorem \ref{BuGe} with the operator $\mathcal{P}_k$ (with initial data $U^0_{k})$. Then, when $k \rightarrow \infty$, the sequence $(U_k)$ converges weakly to, 
 $$ U(t) = S(t) U_0,$$
 the solution to the wave equation given by the same theorem with $\mathcal{P}$ and initial data $U_0= (u_0,u_1)$. 
 \end{prop}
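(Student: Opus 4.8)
The plan is to run an energy method: bound $U_k(t)$ uniformly in $k$ by a conserved energy, extract a weak-$*$ limit, and identify it with $U(t)=S(t)U_0$ through the Hille--Yosida uniqueness of Lemma~\ref{unique2}. Since $U^0_k=(v_{0,k},v_{1,k})\in C_0^\infty\times C_0^\infty\subset D(\mathcal{P}_k)$, part~(1) of Theorem~\ref{BuGe} gives $U_k=(u_k,\partial_t u_k)\in C^1(\xR;\mathcal{H})\cap C^0(\xR;D(\mathcal{P}_k))$, so the energy
\[
E_k(t)=\tfrac12\Bigl(\Vert\partial_t u_k(t)\Vert_{L^2}^2+\sum_{j=1}^r\Vert X_j u_k(t)\Vert_{L^2}^2+\eps_k\Vert\nabla u_k(t)\Vert_{L^2}^2\Bigr)
\]
is conserved, since $\frac{d}{dt}E_k=\RE\bigl(\partial_t u_k,\partial_t^2 u_k+P_k u_k\bigr)=0$. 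The key point is that $E_k(0)$ is bounded uniformly in $k$: indeed $v_{1,k}\to u_1$ in $L^2$, $v_{0,k}\to u_0$ in $H^1_X$, and $\eps_k\Vert\nabla v_{0,k}\Vert_{L^2}^2\les\eps_k\Vert u_{0,k}\Vert_{H^1}^2\to 0$ by the very choice of $\eps_k$ in \eqref{choix} (using $\Vert v_{0,k}\Vert_{H^1}\les\Vert u_{0,k}\Vert_{H^1}$, as $\chi$ is fixed and smooth). Hence $u_k$ is bounded in $L^\infty_{loc}(\xR;H^1_X)$, $\partial_t u_k$ in $L^\infty_{loc}(\xR;L^2)$, and $\sqrt{\eps_k}\,\nabla u_k$ in $L^\infty_{loc}(\xR;L^2)$.

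Next I would use weak-$*$ compactness in $L^\infty(I;H^1_X)$ and $L^\infty(I;L^2)$ over bounded intervals $I$, together with a diagonal extraction, to obtain a subsequence with $u_k\stackrel{*}{\rightharpoonup}v$ in $L^\infty_{loc}(\xR;H^1_X)$ and $\partial_t u_k\stackrel{*}{\rightharpoonup}\partial_t v$ in $L^\infty_{loc}(\xR;L^2)$, and pass to the limit in the weak form of $\partial_t^2 u_k+Pu_k-\eps_k\Delta_0 u_k=0$. Testing against $\psi(t)\phi(x)$ with $\psi\in C_0^\infty(\xR),\ \phi\in C_0^\infty(\xR^d)$ and moving $P,\Delta_0$ onto $\phi$ gives
\[
-\int\psi'(t)\bigl(\partial_t u_k(t),\phi\bigr)\,dt=-\int\psi(t)\bigl(u_k(t),P\phi\bigr)\,dt+\eps_k\int\psi(t)\bigl(u_k(t),\Delta_0\phi\bigr)\,dt.
\]
The regularizing term is harmless: it is bounded by $\eps_k\,\Vert\Delta_0\phi\Vert_{L^2}\,\sup_t\Vert u_k(t)\Vert_{L^2}\int|\psi|\to 0$, using only the $L^2$-bound on $u_k$ and $\eps_k\to 0$. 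The other two terms pass to the limit against the fixed weights $\psi'\otimes\phi$ and $\psi\otimes P\phi$ in $L^1(I;L^2)$, yielding $\int\psi(v,P\phi)\,dt=\int\psi'(\partial_t v,\phi)\,dt$; that is, $V=(v,\partial_t v)$ solves $\partial_t V=\mathcal{P}_0V$ in the distribution sense. Since $\mathcal{P}_0V=(\partial_t v,-Pv)\in L^\infty_{loc}(\xR;\widetilde{\mathcal{H}})$, it follows that $V\in C^0(\xR;\widetilde{\mathcal{H}})$.

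Finally I would fix the Cauchy data and conclude. For fixed $\phi\in C_0^\infty(\xR^d)$ the scalar functions $f_k(t)=(u_k(t),\phi)$ satisfy $f_k''=-(u_k,P\phi)+\eps_k(u_k,\Delta_0\phi)$, so $f_k,f_k',f_k''$ are bounded uniformly on each compact interval; by Arzel\`a--Ascoli $f_k\to(v(\cdot),\phi)$ in $C^1_{loc}$, and evaluating at $t=0$ with $f_k(0)=(v_{0,k},\phi)\to(u_0,\phi)$, $f_k'(0)=(v_{1,k},\phi)\to(u_1,\phi)$ forces $v(0)=u_0$ and $\partial_t v(0)=u_1$, i.e. $V(0)=U_0$. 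As $U_0\in\mathcal{H}=D(\widetilde{\mathcal{P}})$ and $V\in C^0(\xR;\widetilde{\mathcal{H}})$ solves the distributional equation with $V(0)=U_0$, the weakened uniqueness of Lemma~\ref{unique2} yields $V=S(t)U_0=U$; since the limit is independent of the subsequence, the whole sequence $U_k$ converges weakly to $U$. I expect the main obstacle to be the uniform energy bound: everything hinges on the contribution $\eps_k\Vert\nabla u_k(t)\Vert_{L^2}^2$ to $E_k$ staying bounded, which is exactly what the delicate definition of $\eps_k$ in \eqref{choix} secures, whereas the vanishing of $\eps_k\Delta_0 u_k$ in the limit and the identification via Lemma~\ref{unique2} are then routine.
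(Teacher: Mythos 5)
Your proposal is correct and follows essentially the same route as the paper: uniform energy bounds secured by the choice of $\eps_k$ in \eqref{choix}, weak-$*$ extraction, passage to the limit in the distributional equation (with the $\eps_k\Delta_0$ term killed by $\eps_k\to 0$), identification of the Cauchy data via an Arzel\`a--Ascoli argument on scalar pairings, and conclusion through the weakened uniqueness of Lemma~\ref{unique2} for $\widetilde{\mathcal{P}}$ followed by uniqueness of the limit to drop the subsequence. The only differences are cosmetic (you test with $\psi\otimes\phi$ and use $C^1_{loc}$ convergence of $f_k$ where the paper passes to the limit in the Hille--Yosida identity \eqref{HY} and uses $C^0$ convergence of the vector pairing $\langle U_k,\Phi\rangle$).
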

 \begin{proof}
 We start with the energy conservation, 
 \begin{align*}
 & E_k (t) = \int \big(\eps_k |\nabla _x u_k|^2 + \sum_{j = 1}^r |X_j  u_k |^2  + |\partial_t u_k|^2 \big)(t,x)dx = E_k (0),\\
 & E_k(0) = \int \big(\eps_k  \vert  \nabla _x v_{0,k} \vert^2 + \sum_{j = 1}^r \vert X_j  v_{0,k} \vert ^2  + \vert v_{1,k}\vert^2 \big)(x)dx.
 \end{align*}
 Moreover, writing $u_k(t, \cdot) = u_k(0, \cdot) + \int_0^t\partial_s u_k(s, \cdot)\, ds$,  we obtain,
 $$ \| u_k (t) \|_{L^2} \leq \Vert u_0\Vert_{L^2} +  |t| E_k(0)^{1/2}.$$

 From the choice of $\eps_k$ given in~\eqref{choix},  we see easily that, 
 $$\exists \, C>0 :\quad E_k(0) \leq  C, \quad \forall k.$$
 
 Since $E_k$ controls uniformly the square of the norm in $ H^1_X \times L^2$, we deduce that we can find a subsequence $U_{k_n}$ such that 
 \begin{equation}\label{cv1}
  U_{k_n} \rightarrow U  \text{ weakly in }  \,  L^\infty (\xR,  H^1_X \times L^2), \text{ as } n \rightarrow + \infty
 \end{equation}
 
 From the equation (satisfied in the distribution sense),
 \begin{equation}\label{eqUeps}
   \partial_t  U_{k_n} = \begin{pmatrix} 0 & 1 \\ -P + {\eps_{k_n}} \Delta_0 & 0 \end{pmatrix} U_{k_n},
   \end{equation}
 we deduce first that, 
 \begin{equation}\label{eq}
 \partial_t  U = \begin{pmatrix} 0 & 1 \\ -P  & 0 \end{pmatrix} U, \quad \text{ in } \mathcal{D}'(\xR\times \xR^d).
 \end{equation}
 
Now, using \eqref{est-H-1} and \eqref{eqUeps}, we see that,
\begin{equation}\label{borne} 
 \partial_t  U_{k} \text{  is bounded in } L^\infty  ( (-1,1),  L^2\times (H^{-1} +H^{-1}_X)).
 \end{equation}

Let $\Phi = (\varphi, \psi) \in (C_0^\infty(\xR^d))^2$ and set $\theta_k(t) = \langle U_k(t), \Phi\rangle$ where $\langle \cdot, \cdot\rangle$ is the distribution duality.  Then $(\theta_{k}) \subset C^0([-1,1])$ and it follows from \eqref{cv1} that $(\theta_{k_n})$ converges to $ \langle U(\cdot), \Phi\rangle$ in $\mathcal{D}'(\xR)$.  

On the other hand we have $\partial_t \theta_{k_n}(t) = \langle \partial_t U_{k_n}(t), \Phi\rangle$  so by \eqref{borne} $(\partial_t \theta_{k_n})$ is uniformly bounded in $L^\infty([-1,1])$. Therefore the set $(\theta_{k_n})$ is equicontinuous, and hence from from the Ascoli theorem it is relatively compact  in $C^0([-1,1])$.   But there exist only one possible accumulation point $ \langle U(\cdot), \Phi\rangle$ which implies  that $\theta_{k_n}$ converges to $\theta $ in $C^0([-1,1])$. As a consequence, we have $\theta \in C^0$ and 
$$U(0)= \lim_{n\rightarrow + \infty}  = U_{0,k_n}= U_0, \text{ in } \mathcal{D}'(\xR^d).$$ 

  We also have from the characterization of Hille-Yosida solutions given in Theorem~\ref{BuGe}, (writing $k$ instead of $k_n$)
 $$ \int \psi(t) U_{k} (t) dt \in D( \mathcal{P}_{k}), $$
 \begin{equation}\label{HY}
  \mathcal{P}_{k} \Bigl(\int \psi(t) U_{k}  (t) dt \Bigr) = - \int \psi'(t) U_{k}  (t) dt .
  \end{equation}
We can now pass to the limit $k\rightarrow + \infty$ in ~\eqref{HY} and get (in distribution sense),
\begin{equation}\label{bef}
 \mathcal{P} \Bigl( \int \psi(t) U (t) dt \Bigr) = - \int \psi'(t) U(t) dt.
 \end{equation}
Now, clearly $\int \psi'(t) U (t) dt \in \mathcal{H}$ and from~\eqref{bef} we deduce,  
$$ \mathcal{P} \Bigl( \int \psi(t) U (t) dt \Bigr) \in \mathcal{H} \text{ so } \int \psi(t) U (t) dt \in D(\mathcal{P}).$$ 
From~\eqref{eq}, we know that, 
$$\partial_t U \in L^\infty(\xR,  L^2\times H^{-1}_X),$$ and consequently, 
$$U \in C^0(\xR,   L^2\times H^{-1}_X) $$
while on the other hand  we have $U(0) = U_0$ (in $\mathcal{D}'$ but hence also in $\mathcal{H}$).  Now the uniqueness part in Theorem~\ref{BuGe} for the operator $\widetilde{\mathcal{P}}$ implies  $U(t) = \widetilde{S}(t) U_0$, and Lemma~\ref{unique2} imply in turn $U(t) = {S}(t) U_0$. Eventually, since the limit is unique, the extraction process is unnecessary and the whole family $(U_k)$ converges to $U= S(t) U_0$.
\end{proof}
 
Then we have the following result.   
\begin{prop}(\cite{E} Theorem 8, chapter 7) \label{cas-ell}
Let $(u_0, u_1) \in H_X^1(\xR^d)\times L^2(\xR^d)$ and  let $u_{k}$ be the   solution of the problem,
$$( \partial_t^2  + P_{k}) u_{k}= 0 \text { in } \xR^{1+d}, \quad u_{k}\arrowvert_{t=0} = v_{0,k}, \quad \partial_t u_{k}\arrowvert_{t=0} = v_{1,k}$$
given by Hille Yosida Theorem, where $(v_{j,k})$ for $j=0,1$ satisfy \eqref{vjk}.

 Then   $u_{k}$  vanishes within $C_k(t_0-\delta)$.
\end{prop}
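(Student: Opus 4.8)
The plan is to run the classical energy (finite-propagation-speed) argument for the \emph{non-degenerate} wave operator $\partial_t^2+P_k$, the point being that the regularization has removed the subellipticity. First I would record that the extra fields $\sqrt{\eps_k}\,\partial_1,\dots,\sqrt{\eps_k}\,\partial_d$ make the family defining $P_k$ of full rank at every point, so that $P_k=\sum_j X_j^*X_j-\eps_k\Delta$ is uniformly elliptic with smooth coefficients and $\partial_t^2+P_k$ is strictly hyperbolic. Writing $X_j=\sum_l b_{jl}\partial_l$, its principal symbol is the positive definite form $p_k(x,\xi)=\sum_{j=1}^r(b_j(x)\cdot\xi)^2+\eps_k|\xi|^2$, and the distance $d_{P_k}$ of \eqref{dist-se} is exactly the Riemannian distance of the metric dual to $p_k$. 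Since the data $v_{0,k},v_{1,k}$ lie in $C_0^\infty(\xR^d)$ and the coefficients are smooth, the Hille--Yosida solution $u_k$ is smooth (elliptic regularity for $P_k$ together with propagation of regularity for $\partial_t^2+P_k$); as the whole argument is confined to a compact cone, this is all that is needed to justify the computations below.

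Next I would localize the energy on the backward cone with vertex $(t_0-\delta,x_0)$. For $0\le t\le t_0-\delta$ put $K_t=\overline{B_{P_k}(x_0,(t_0-\delta)-t)}$ and
\[
 e(t)=\tfrac12\int_{K_t}\Big(|\partial_t u_k|^2+\sum_{j=1}^r|X_j u_k|^2+\eps_k|\nabla u_k|^2\Big)(t,x)\,dx ,
\]
the localization of the conserved energy $E_k$ used above; write $w$ for the integrand. Multiplying the equation by $\partial_t u_k$, integrating over $K_t$ and integrating by parts, the interior terms cancel through the equation $\partial_t^2 u_k=-P_k u_k$, and $\tfrac{d}{dt}e(t)$ reduces to integrals over $\partial K_t=\{x:d_{P_k}(x,x_0)=(t_0-\delta)-t\}$: a flux term, and the negative transport term $-\tfrac12\int_{\partial K_t}|\nabla_x d_{P_k}|^{-1}w\,dS$ coming from the inward motion of the moving boundary.

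The sign of the remaining flux term is where the sharpness of the cone enters, and this is the main obstacle. On $\partial K_t$ the flux density equals $|\nabla_x d_{P_k}|^{-1}\big(\sum_j (X_j u_k)(X_j d_{P_k})+\eps_k\nabla u_k\cdot\nabla d_{P_k}\big)\partial_t u_k$. Applying the Cauchy--Schwarz inequality to the positive definite bilinear form $(\varphi,\psi)\mapsto\sum_j(X_j\varphi)(X_j\psi)+\eps_k\nabla\varphi\cdot\nabla\psi$ bounds the bracket by $w_{\mathrm{sp}}^{1/2}\,p_k(x,\nabla_x d_{P_k})^{1/2}$, where $w_{\mathrm{sp}}=\sum_j|X_j u_k|^2+\eps_k|\nabla u_k|^2$. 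The decisive identity is the eikonal equation $p_k\big(x,\nabla_x d_{P_k}(\cdot,x_0)\big)=1$, valid for a.e.\ $x$ because $d_{P_k}$ is the Riemannian distance of the co-metric $p_k$; together with $ab\le\tfrac12(a^2+b^2)$ this makes the flux density at most $\tfrac12|\nabla_x d_{P_k}|^{-1}w$, i.e.\ exactly the transport term, so that $e'(t)\le 0$. The genuinely delicate point, which I would treat either by a Lipschitz divergence theorem and the coarea formula or by approximating $d_{P_k}$ from above by smooth functions $\phi_n$ with $p_k(x,\nabla_x\phi_n)\le 1+o(1)$ and passing to the limit, is that $d_{P_k}(\cdot,x_0)$ is only locally Lipschitz, so the eikonal identity holds only almost everywhere and the lateral surface is merely Lipschitz.

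Finally I would conclude by monotonicity. By \eqref{vjk} the data vanish on $B_{P_k}(x_0,t_0-\delta)$, which is the interior of the base $K_0$, so $e(0)=0$; hence $e(t)=0$ for every $0\le t\le t_0-\delta$. Thus $\partial_t u_k$, all the $X_j u_k$, and (since $\eps_k>0$) $\nabla u_k$ vanish throughout the solid cone $\bigcup_t\{t\}\times K_t$. Its full space-time gradient therefore vanishes there; as the cone is connected and $u_k=0$ on its base, $u_k\equiv 0$ on $C_k(t_0-\delta)$, which is the assertion.
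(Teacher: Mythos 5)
Your proof is correct, but it is worth noting that the paper does not actually prove this proposition: it is stated with a bare citation to Evans (\cite{E}, Theorem 8, Chapter 7), i.e.\ it is invoked as the classical finite-propagation-speed theorem for the (now uniformly elliptic, hence strictly hyperbolic) regularized operator $\partial_t^2+P_k$. What you supply is the standard sharp energy argument behind that citation: identification of $d_{P_k}$ with the Riemannian distance dual to the principal symbol $p_k$, differentiation of the energy on the shrinking balls $K_t$, cancellation of the interior terms via the equation, and domination of the boundary flux by the transport term through Cauchy--Schwarz combined with the eikonal identity $p_k(x,\nabla_x d_{P_k})=1$. This is a genuine added value in one respect: the proposition as used in the paper requires the \emph{sharp} cone $C_k(t_0-\delta)$ defined by $d_{P_k}$, not merely a Euclidean cone with speed given by the ellipticity constants, and your argument proves exactly that sharp version. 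You correctly isolate the only delicate point, namely that $d_{P_k}(\cdot,x_0)$ is only locally Lipschitz (smooth off the cut locus), so the eikonal equation holds only almost everywhere and the lateral boundary is merely Lipschitz; the approximation by smooth $\phi_n$ with $p_k(x,\nabla\phi_n)\le 1+o(1)$, or the coarea/Lipschitz divergence route, does close this gap in the standard way. Two very minor points you may wish to make explicit: the balls $B_{P_k}(x_0,\rho)$ appearing in the cone are bounded (so the cone is indeed compact and the local uniform ellipticity suffices), and the smoothness of $u_k$ follows from $v_{0,k},v_{1,k}\in C_0^\infty$ lying in $D(\mathcal{P}_k^m)$ for all $m$ together with elliptic regularity for $P_k$. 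Neither affects the validity of the argument.
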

 
Let us now come back to the proof of Theorem~\ref{influ}

Assume that $(u_0, u_1)$ vanish in $B_P(x_0,t_0)$. Then, by \eqref{vjk}   for $k$ large enough, $(v_{0,k}, v_{1,k})$ vanish in $B_{P_k}(x_0, t_0-\delta)$. Then  Proposition \ref{cas-ell}   implies that $u_{k}$ vanish in $C_k(t_0-\delta) = \{(t,x) \in [0, t_0-\delta)\times \xR^d: d_{P_{k}}(x,x_0) <t_0-\delta-t\}$, thus in the set  $C(t_0-\delta)= \{(t,x) \in [0, t_0-\delta)\times \xR^d: d_{P}(x,x_0) <t_0-\delta-t\}$ since $d_{P_{k}} \leq d_P$. Since $(u_{k})$ converges to $u$, in the space of distributions, we deduce that $u$ vanishes in the set $C(t_0-\delta)$.  Since this holds for every $\delta>0$ we deduce that  $u$ vanishes in the set $C(t_0)$.

The proof of Theorem \ref{influ} is complete.
\end{proof}

\section{Unique continuation of square roots of H\" ormander's operators}
In this section we prove Theorem~\ref{main}. We assume in this section that the vector fields $X= (X_1, \ldots, X_r)$ have analytic coefficients on   $\xR^d$.  

Since the operator $P$ is a positive Friedrichs self adjoint extension, we can define its square root $\sqrt{P}$ with domain $H^1_X(\xR^d)$ introduced in Definition \ref{H1X}.
 The proof of Theorem~\ref{main} combines three arguments. First,   our result on the   dependency  domain
for the solutions of the weakly hyperbolic operator $Q= \partial_t^2 + P$, then an argument of holomorphic extension due to Masuda \cite{Ma},  and eventually a result of Bony \cite{B}, concerning the unique continuation for $P$.
   
 \subsubsection{Masuda's argument}
Let $u_0 \in  H^1(\xR^d)\cap H_X^1(\xR^d)$ be such that   $u_0 = \sqrt{P} u_0 = 0$ on an open  subset $\omega \subset \xR^d$. Consider the problem,
\begin{equation}\label{demi-w}
(i \partial_t + \sqrt{P})u = 0, \quad u \arrowvert_{t=0} = u_0.
\end{equation}
Since $ \partial_t u\arrowvert_{t=0} = i \sqrt{P}u_0$ it follows that  $u$ is also a solution  of the problem \eqref{sewe} with $u_0 = u_1 = 0$ in $\omega$. Therefore, by Corollary \ref{coro1}  there exists $\delta>0$ and $\omega_1 $ open with $\overline{\omega_1} \subset \omega$ such that,
\begin{equation}\label{u=0}
 u  = 0 \text{ in }  (-\delta, \delta)\times \omega_1.
 \end{equation}
Now since $\sqrt{P} $ is a positive self adjoint operator in $L^2(\xR^d)$ by the spectral theorem the solution of \eqref{demi-w} can be written as,
\begin{equation}\label{demi-w2}
  u(t,\cdot) = e^{it \sqrt{P}} u_0= \int_0^{+\infty} e^{it\lambda} dE(\lambda)u_0.
  \end{equation}
  The above formula shows that $u$ has a holomorphic extension to the upper half plane $\text{Im }z >0$ with values in $L^2(\xR^d)$ in the sense that the function,
  $$z \mapsto  \int_0^{+\infty} e^{iz\lambda} \langle dE(\lambda)u_0, \varphi\rangle$$
  is holomorphic  $\text{Im }z >0$ for any $\varphi \in L^2(\xR^d)$.

 We shall still denote by $u(z, \cdot)$ this holomorphic extension.
  
  According to \eqref{sewe} this extension satisfies,
  \begin{equation}\label{cauchyC}
  \partial_z^2u + Pu = 0, \quad \text{ in } \{z \in \xC: \text {Im } z >0, x \in \xR^d\}.
  \end{equation}

  Now for $z \in \xC$  such that $\vert \text{Re } z \vert < \delta$ we set,
  \begin{equation}\label{U}
   U(z,\cdot) =
   \begin{cases}
    u(z,\cdot), & \text{Im }z \geq0,\\ \overline{u}(\overline{z},\cdot), &\text{Im } z <0. 
    \end{cases}
   \end{equation}
  Let $\varphi \in C_0^\infty(\omega_1)$ and set,
  \begin{equation}\label{theta}
  \theta(z) = \langle U( z,\cdot), \varphi\rangle
  \end{equation}

  Then  $\theta$ is holomorphic for $\text{Im } z >0$, for $\text{Im } z <0$ and continuous  in   $\{z:  \vert \text{Re } z \vert < \delta\}$ by \eqref{u=0}. By Morera's theorem,  $\theta$ is holomorphic in the set $\{z: \vert \text{Re } z \vert < \delta  \}$. Since, by \eqref{u=0}, it vanishes in the set  $\{z: \vert \text{Re } z \vert < \delta, \, \text{Im }z = 0 \}$  we have $\theta(z)= 0$ in  $\{ z: \vert \text{Re } z \vert < \delta\}$. 
  
  According to \eqref{U} and \eqref{theta} it follows that $u(z, \cdot) = 0$ for  $ \vert \text{Re }z \vert < \delta$ in $\mathcal{D}'(\omega_1)$ and consequently  $u(z,x) =0$ for almost all $x$ in $ \omega_1$.

  Now, for $x \in \xR^d, $   $ \vert \xi \vert < \delta$   let us set,
  \begin{equation}\label{u=}
  v(\xi, \eta, x) = u(\xi + i \eta,x). 
\end{equation}
  Then, by the above argument and \eqref{u=0} we have,
  \begin{equation}\label{v=0}
  v(\xi, \eta, x) = 0 \quad \text {if } \vert \xi\vert <\delta, \quad \text{ a.a } x \in \omega_1.
  \end{equation}

  On the other hand, since $u$ is holomorphic in the set $\{z:    \text{Im } z >0  \}, $ the function $v$ is harmonic with respect to $(\xi, \eta)$ in the set $ \mathcal{O} = \{(\xi, \eta): \vert \xi \vert <\delta,\,  \eta  >0\}$. Therefore,
  $$(\partial^2_\xi + \partial^2_\eta) v(\xi, \eta, x) = 0, \quad  \text{in } \mathcal{O} \times \xR^d.$$
 Now, by \eqref{cauchyC} we have in $\mathcal{O}\times \xR^d$, 
  $$\partial^2_\eta v(\xi, \eta, x) = - \partial^2_z u(\xi + i \eta,x) = Pv(\xi,\eta, x).$$
Using the two above equations  we find that,
  \begin{equation}\label{Ho}
   ( -\partial_\xi^2 v -2 \partial_\eta^2 v + Pv )(\xi, \eta, x) = 0, \quad \text{in } \mathcal{O} \times \xR^d.
  \end{equation}
  Notice that by H\" ormander's theorem \cite{H}, $v$ is a $C^\infty$ function in $\mathcal{O} \times \xR^d$.
  \subsubsection{Bony's result}
  In \cite{B} Bony has proved the following.
  \begin{theo}\label{Bony}
  Let $(Y_1, \ldots, Y_m)$ be a set of vector fields on   $\xR^N$ with analytic coefficients   satisfying   H\"ormander's condition namely,
  $$\text{The Lie algebra  generated by these vector fields has everywhere maximal rank}.$$
  Let $w$ be a distribution solution,  in $\xR^N$,  of the equation $\sum_{k=1}^m Y_k^*Y_kw + \sum_{j=1}^m a_j(x) Y_j w +a_0(x)w= 0$ where the $a_j's$ are analytic If $w$ vanishes in any open subset  then $w$ vanishes identically in $\xR^N$.
  \end{theo}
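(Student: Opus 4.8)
The plan is to recover Bony's statement by the maximum-principle and propagation-of-zeros method rather than by analytic continuation. This is the natural route here, because these operators are \emph{not} analytically hypoelliptic in general (the Baouendi--Goulaouic operator $\partial_\xi^2+\partial_\eta^2+\xi^2\partial_t^2$ is $C^\infty$-hypoelliptic but not analytically hypoelliptic), so one cannot simply argue that $w$ is real analytic and propagate its vanishing by a Holmgren-type theorem across the characteristic hypersurfaces that necessarily occur for a degenerate operator.

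First I would record that, by H\"ormander's theorem \cite{H}, the operator $L=\sum_k Y_k^*Y_k+\sum_j a_jY_j+a_0$ is hypoelliptic, so every distributional solution $w$ is $C^\infty$; since the $a_j$ are real, splitting $w$ into real and imaginary parts reduces matters to real-valued $w$. Writing $\sum_k Y_k^*Y_k=-\sum_k Y_k^2+(\text{first order})$ exhibits $-L$ as a degenerate-elliptic operator with non-negative second-order part and a drift tangent to the $Y_j$; after a substitution $w=e^{\psi}\tilde w$ with $\psi$ chosen to give the zeroth-order coefficient a favorable sign locally, Bony's weak and strong maximum principles and the accompanying Hopf boundary lemma (together with the degenerate Harnack inequality) apply to $\pm\tilde w$.

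The heart of the argument is a propagation lemma: if a (super)solution vanishes at a point $p$, then it vanishes along every horizontal curve issued from $p$, that is, along the integral curves of $\pm Y_j$. I would establish this by restricting to a single integral curve and using the strong maximum principle and Hopf lemma to force the vanishing to be transported, the transport being controlled by a Gronwall/ODE comparison along the curve. The delicate point, and the reason analyticity is imposed, is to upgrade propagation along the fields $Y_j$ themselves to propagation along their iterated brackets: in the merely $C^\infty$ category a solution can vanish to infinite order across a characteristic hypersurface and thereby block the transport, which is exactly the instability responsible for the failure of stable uniqueness alluded to in the introduction (cf.\ \cite{Ba}); the analyticity of the coefficients rules out such flat behavior and makes the zero set invariant under the entire bracket-generated distribution.

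Finally I would globalize. Let $Z$ be the interior of the zero set of $w$, which is open and nonempty by hypothesis. The propagation lemma shows that $Z$ is invariant under the flows of all the $Y_j$ and of their brackets, so by the Chow--Rashevskii connectivity theorem under the bracket-generating (H\"ormander) hypothesis, $Z$ is also closed; since $\xR^N$ is connected, $Z=\xR^N$ and $w\equiv0$. The main obstacle is precisely the bracket step of the propagation lemma --- converting horizontal propagation into propagation in all directions --- since this is where both the subelliptic geometry and the analyticity hypothesis are indispensable, and where the present analytic statement parts ways with its false $C^\infty$ analogue.
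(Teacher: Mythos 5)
A point of context first: the paper does not prove this statement at all --- it is imported verbatim from Bony's 1969 paper \cite{B} and used as a black box. Your outline is, in its architecture, a reconstruction of Bony's own route: hypoellipticity \cite{H} to get $w\in C^\infty$, the degenerate maximum principle and Hopf lemma, propagation of the zero set along integral curves of the $Y_j$, an upgrade from the fields to their brackets via analyticity, and Chow--Rashevskii connectivity to globalize. So the strategy is the right one. But as a proof it has two genuine gaps, and they sit exactly at the two load-bearing steps.

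First, your propagation lemma is misstated. For a sign-changing solution (and $w$ here carries no sign), vanishing \emph{at a point} $p$ does not propagate along horizontal curves: $w(x)=x$ solves $\partial_x^*\partial_x w=0$ on $\xR$, vanishes at the origin and nowhere else. What propagates is vanishing \emph{on an open set}: one must show that the complement of $\supp w$ is invariant under the flows of the $Y_j$, equivalently that each $Y_j$ is tangent (in Bony's sense) to the closed set $\supp w$. The mechanism for this is not a one-dimensional Gronwall comparison along the curve but a genuinely multidimensional barrier/Hopf argument at boundary points of the support, resting on Bony's maximum principle and Harnack inequality for degenerate-elliptic operators; this is the analytic heart of the matter and your sketch does not supply it. Second, the bracket step is precisely Nagano's orbit theorem, in Bony's form: a closed set to which a family of \emph{analytic} vector fields is tangent is also tangent to every field of the Lie algebra they generate. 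You assert it, but you also misattribute its mechanism. Analyticity of the coefficients does \emph{not} rule out flat behaviour of the solution --- as you yourself observe, these operators are not analytic-hypoelliptic, so $w$ is in general only $C^\infty$ and may vanish to infinite order at points of $\partial(\supp w)$. The analyticity is used purely on the vector fields, to identify the orbit of the family $\{Y_j\}$ with the integral manifold of the generated Lie algebra; it is a geometric statement about the fields and their flows, not a regularity or non-flatness statement about $w$. Without proofs of the tangency-of-the-support step and of this invariance theorem, what you have is a correct table of contents for \cite{B} rather than a proof.
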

Coming back to the equation \eqref{Ho} we set, 
$$ Y_1= \partial_\xi,\, Y_2= \sqrt{2} \partial_ \eta, \, Y_{j+2} = X_j,  \, j =1, \ldots,r .$$
Then the function $v$ is a solution of the equation,
$$\sum_{k=1}^{r+2} Y_k^*Y_k v = 0$$
 in   $ \mathcal{O}  \times \xR^d$. The system $(Y_1, \ldots, Y_{r+2})$ 
 has analytic coefficients and satisfy H\"ormander's condition in $ \mathcal{O}  \times \xR^d$.  Since, by \eqref{v=0},   $v$ vanishes on the open set $  \{(\xi, \eta,x):  \vert \xi \vert < \delta,  \,\eta>0, \, x \in  \omega_1\} $ it follows from Bony's result that,
 $$v  \text{ vanishes identically in } \{(\xi, \eta): \vert \xi \vert<\delta, \eta>0\}  \times \xR^d.$$
 
 Since $v$ is continuous in  $\{(\xi, \eta): \vert \xi\vert <\delta, \,\eta \geq  0\} $ we deduce from \eqref{u=} that $u_0(x) = v(0,0, x) = 0$ for all $x \in \xR^N$, which completes  the proof of Theorem \ref{main}.
 
 \section{Unique continuation for $s$-powers  of H\" ormander's operators, $0<s<1$.}
 
 In this section we consider, as before, a system of vector fields  $(X_1, \ldots, X_r)$ on $\xR^d$, on which we make, as in the second part,  the following assumptions,
\begin{equation*} 
\begin{aligned}
& (ii) \quad \text{the Lie algebra generated by these vector fields has maximal rank at every point in } \xR^d,\\
& (i) \quad \text{ the coefficients of the } X_j's  \text{ are analytic in }  \xR^d. 
\end{aligned}
\end{equation*}

We shall consider the operator, $P = \sum_{j=1}^r X_j^*X_j$, 
where $X_j^*$ is the adjoint of $X_j$ and denote by $H^1_X$ the closure of $C^\infty_0 ( \mathbb{R}^d)$ for the norm, 
$$ \| u \|^2_{H^1_X} = \sum _j \| X_j u \|_{L^2( \mathbb{R}^d)}^2$$
We shall  work, as before, with the Friedrich's extension of $P$ (still denoted by $P$) with domain given by,
\begin{equation}
 D(P) = \{u \in H^1_X(\xR^d) : Pu \in L^2(\xR^d)\},
 \end{equation}
 and it is well known that the operator $P$ is non negative and selfadjoint.
 
  Then for $0<s<1$ we can define, by the functional calculus, the fractional powers $P^s$ of $P$. It is defined by the formula,
  $$P^s \varphi(x) = \int_0^{+\infty} \lambda^s dE(\lambda) \varphi, $$
  where $E(\lambda) $ is the spectral decomposition of $P$ with domain,
  $$D(P^s) = \{\varphi \in L^2(\xR^d):  P^s \varphi \in L^2(\xR^d)\}.$$

The goal of this section is to prove   Theorem~\ref{ucHo-s}

\subsection{A result by Chamorro-Jarrin}

For $\psi \in L^2(\xR^d)$ we denote by $v = e^{-tP} (\psi)$ the solution of the heat problem,
$$\partial_t v + P v = 0, \quad v \arrowvert_{t=0} = \psi.$$
\begin{theo}\label{cha-ja}
Let .  Let $s\in (0,1)$ and $\varphi \in L^2(\xR^d).$ Consider the following problem in $(0, +\infty) \times \xR^d,$
\begin{equation}\label{ext-pb}
 \partial_t^2 u(t,x) + \frac{1-2s}{t} \partial_t u(t,x) - Pu(t,x) = 0, \quad u\arrowvert_{t=0} = \varphi.
 \end{equation}
Then the function $u: (0, +\infty) \times \xR^d \to \xR$ given by the formula,
\begin{equation}\label{formule}
 u(t,x) = \frac{1}{\Gamma(s)} \int_0^{+\infty} P^se^{-\tau P}(\varphi)(x) e^{-\frac{t^2}{4 \tau}} \frac{d\tau}{\tau^{1-s}},
 \end{equation}
where $\Gamma$ is the Gamma function, is a solution of \eqref{ext-pb} which is in $C^\infty(]0, +\infty[ \times \xR^d)$ and satisfies    $ t^k \partial_t^k u \in C([0,+\infty[, L^2(\xR^d))$ for every $k\in \xN$. Moreover when $\varphi$ belongs to the domain of $P^s$ we have, in the $L^2$ sense,
\begin{equation}\label{limite}
  \lim_{t\to 0^+} t^{1- 2s} \partial_t u(t, x) = C(s) P^s\varphi(x),
  \end{equation}
where $C(s) $ is a (non zero) constant  depending only on $s \in (0,1)$.

 \end{theo}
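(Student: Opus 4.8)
The plan is to diagonalize the whole problem through the spectral theorem, so that every assertion about the operator $P$ becomes a statement about a single scalar multiplier which I analyze directly from the integral in \eqref{formule}. Writing $\varphi=\int_0^{+\infty}dE(\lambda)\varphi$ and $P^se^{-\tau P}\varphi=\int_0^{+\infty}\lambda^se^{-\tau\lambda}\,dE(\lambda)\varphi$, a Fubini argument (legitimate because $\int_0^{+\infty}e^{-\tau\lambda}e^{-t^2/(4\tau)}\tau^{s-1}\,d\tau$ converges absolutely for $t>0$, $\lambda\ge0$) rewrites \eqref{formule} as
\begin{equation*}
u(t,\cdot)=\int_0^{+\infty}m_s(t,\lambda)\,dE(\lambda)\varphi,\qquad m_s(t,\lambda)=\frac{\lambda^s}{\Gamma(s)}\int_0^{+\infty}e^{-\tau\lambda}e^{-t^2/(4\tau)}\tau^{s-1}\,d\tau .
\end{equation*}
Since $e^{-t^2/(4\tau)}\le 1$ one reads off $0\le m_s(t,\lambda)\le 1$ and $m_s(0,\lambda)=\lambda^s\Gamma(s)^{-1}\int_0^\infty e^{-\tau\lambda}\tau^{s-1}\,d\tau=1$, so that $u(t,\cdot)\to\varphi$ in $L^2$ by dominated convergence; this settles $u|_{t=0}=\varphi$.

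First I would establish the extension equation \eqref{ext-pb} at the level of the multiplier. Writing $J_a(t,\lambda)=\int_0^{+\infty}e^{-\tau\lambda-t^2/(4\tau)}\tau^a\,d\tau$, differentiation under the integral sign (justified for $t>0$ by the Gaussian factor) gives $\partial_t m_s=-\frac{\lambda^s}{2\Gamma(s)}\,t\,J_{s-2}$ and
\begin{equation*}
\Big(\partial_t^2+\tfrac{1-2s}{t}\partial_t\Big)m_s=\frac{\lambda^s}{\Gamma(s)}\Big(\tfrac{t^2}{4}J_{s-3}-(1-s)J_{s-2}\Big).
\end{equation*}
Integrating the exact derivative $\partial_\tau\big(e^{-\tau\lambda-t^2/(4\tau)}\tau^{s-1}\big)$ over $(0,+\infty)$ — the boundary terms vanish, at $\tau=0$ thanks to $e^{-t^2/(4\tau)}$ and at $\tau=+\infty$ thanks to $e^{-\tau\lambda}$ — yields the recurrence $\lambda J_{s-1}=\tfrac{t^2}{4}J_{s-3}+(s-1)J_{s-2}$, whence $(\partial_t^2+\tfrac{1-2s}{t}\partial_t)m_s=\lambda\,m_s$. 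Because for fixed $t>0$ the exponent $-\tau\lambda-t^2/(4\tau)$ is bounded above by $-t\sqrt\lambda$, the multiplier and all its $t$–derivatives decay faster than any power of $\lambda$, so $u(t,\cdot)\in\bigcap_kD(P^k)$ for every $t>0$ and the spectral integral may be differentiated; hence $u$ solves \eqref{ext-pb} in $L^2$. The $x$–regularity then comes from H\"ormander's hypoellipticity theorem \cite{H} applied to $\partial_t^2-P+\tfrac{1-2s}{t}\partial_t$, whose second order part $\partial_t^2+\sum_jX_j^2$ (modulo smooth first order terms, since $X_j^*X_j=-X_j^2+c_jX_j$) is a sum of squares satisfying the bracket condition in $(t,x)$; together with the $\lambda$–decay in $t$ this gives $u\in C^\infty(]0,+\infty[\times\xR^d)$.

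The core computation is the Neumann limit \eqref{limite}. Rescaling $\tau=t^2\sigma$ in $J_{s-2}$ gives
\begin{equation*}
t^{1-2s}\partial_t m_s(t,\lambda)=-\frac{\lambda^s}{2\Gamma(s)}\int_0^{+\infty}e^{-t^2\sigma\lambda}\,e^{-1/(4\sigma)}\sigma^{s-2}\,d\sigma,
\end{equation*}
and since $e^{-t^2\sigma\lambda}\le 1$ and $\int_0^{+\infty}e^{-1/(4\sigma)}\sigma^{s-2}\,d\sigma=4^{1-s}\Gamma(1-s)<\infty$ (the factor $e^{-1/(4\sigma)}$ tames the singularity at $\sigma=0$), I obtain the uniform bound $|t^{1-2s}\partial_t m_s(t,\lambda)|\le \tfrac{4^{1-s}\Gamma(1-s)}{2\Gamma(s)}\lambda^s$ and the pointwise limit $t^{1-2s}\partial_t m_s(t,\lambda)\to -\tfrac{4^{1-s}\Gamma(1-s)}{2\Gamma(s)}\lambda^s=:C(s)\lambda^s$ as $t\to0^+$. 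Substituting into the spectral representation, the integrand is dominated by $C\lambda^{2s}$, which is integrable against $d\Vert E(\lambda)\varphi\Vert^2$ exactly when $\varphi\in D(P^s)$; dominated convergence then yields $t^{1-2s}\partial_t u\to C(s)P^s\varphi$ in $L^2$. The same rescaling produces uniform bounds on $t^k\partial_t^k m_s$, from which $t^k\partial_t^k u\in C([0,+\infty),L^2)$.

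The main obstacle is not any single identity but the justification of the limit passages: differentiating under both the $\tau$–integral and the spectral integral, and above all controlling the boundary behaviour as $t\to0^+$ uniformly in $\lambda$. The delicate feature is that $J_{s-2}$ diverges at $\tau=0$ when $s<1$, so the finite limit in \eqref{limite} only emerges after the correct $t^{2s-1}$ rescaling concentrates the mass near $\tau\sim t^2$; producing a dominating function that is $\lambda^{2s}$–integrable — which is precisely the hypothesis $\varphi\in D(P^s)$ — is what upgrades the convergence from the distributional sense to the $L^2$ sense.
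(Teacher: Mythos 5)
Your proof is correct and follows essentially the same route as the paper's appendix: reduce everything to a scalar spectral multiplier, verify the extension ODE via an exact derivative in the integration variable, invoke H\"ormander's hypoellipticity for interior smoothness, and obtain the Neumann limit by the rescaling $\tau=t^2\sigma$ plus dominated convergence under the hypothesis $\varphi\in D(P^s)$. The only differences are cosmetic: you keep the variable $\tau$ and the family $J_a$ where the paper substitutes $\mu=\lambda\tau$ to get a manifestly bounded multiplier, and you evaluate the constant $\int_0^{+\infty}e^{-1/(4\sigma)}\sigma^{s-2}\,d\sigma=4^{1-s}\Gamma(1-s)$ explicitly where the paper leaves it as $D(s)$.
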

This result appears in \cite{CJ} with the extra hypothesis that $(X_1, \ldots, X_r)$ is a family of left invariant vector fields on a connected unimodular Lie group. However this theorem holds in our general case with an   almost identical proof. For the sake of completeness we  give this proof in the appendix.

\subsection{A Baouendi-Goulaouic uniqueness result}
We quote here a  particular case, adapted to our situation,  of  \cite[Theorem 4]{BG}.
\begin{theo}\label{Ba-Gou}(Baouendi-Goulaouic \cite{BG})
Let 
$$\mathcal{P} = t^2\partial_t^2 + a_1 t \partial_t + a_0 + t^2P(x, \partial_x), \quad (t,x)\in \xR \times \xR^d,$$ be a second order Fuchs type operator, where $a_1, a_2$ are real numbers and $P$ is a second order differential operator with analytic coefficients in a neighborhood of    a point  $x_0\in \xR^d$. 

Let $\lambda_1, \lambda_2$ be the two roots of the characteristic equation,
$$\lambda(\lambda-1) + a_1 \lambda + a_0 = 0,$$
and let $h \in \xN$ be such that $\mathcal{R}e (\lambda_j)< h, j = 1,2$.

Then if $v  \in C^0(\xR, \mathcal{D}'(\omega_0))$  is such that  $t^{-h}v \in C^0(\xR, \mathcal{D}'(\omega_0)),$ where $\omega_0$ is a neighborhood of $x_0$  and satisfies,
$$\mathcal{P} u = 0, \quad \partial^j_t u(0, \cdot) = 0, \quad 0 \leq j \leq h-1,$$
then $v$ vanishes identically near $(0, x_0) $ in  $\xR \times \xR^d$.
\end{theo}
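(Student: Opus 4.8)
The plan is to prove this as a Fuchsian (regular-singular in $t$) uniqueness statement, following the method of Baouendi and Goulaouic, whose analytic core is a Cauchy--Kovalevskaya theorem for Fuchsian Cauchy problems. First I would recast $\mathcal{P}$ in terms of the Euler field $\theta = t\partial_t$: using $t^2\partial_t^2 = \theta(\theta-1)$ one gets $\mathcal{P} = C(\theta) + t^2 P(x,\partial_x)$ with $C(\lambda) = \lambda(\lambda-1) + a_1\lambda + a_0 = (\lambda-\lambda_1)(\lambda-\lambda_2)$, so that $C(\theta)$ acts on a monomial $t^k$ by the scalar $C(k)$. The hypotheses $\partial_t^j v(0,\cdot) = 0$ for $0\le j\le h-1$ together with $t^{-h}v \in C^0$ say precisely that $v$ vanishes to order $h$ in $t$. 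The non-resonance assumption $\RE \lambda_j < h$ is exactly what makes $C(k)\neq 0$ for every integer $k\ge h$, indeed $|C(k)| \gtrsim k^2$ there; hence $C(\theta)$ is boundedly invertible on the space of functions vanishing to order $\ge h$ in $t$, its inverse $R := C(\theta)^{-1}$ being an explicit integral operator in $t$ (a composition of the elementary antiderivatives $f\mapsto t^{\lambda_j}\int t^{-\lambda_j-1} f\, dt$) that does not lose, and in fact gains, powers of $t$.

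With this in hand, the equation $\mathcal{P}v = 0$ is equivalent to the fixed-point relation $v = -R(t^2 P v) =: Tv$ on the space of $t$-flat (order $\ge h$) solutions. The operator $T$ gains two powers of $t$ from the factor $t^2$ and a decay $\sim k^{-2}$ from $R$, at the cost of the two $x$-derivatives contained in $P$. To turn this into a genuine contraction one works in the Ovsyannikov--Nirenberg scale of Banach spaces of functions analytic in $x$ near $x_0$, parametrized by the radius of analyticity: the loss of $x$-derivatives is absorbed by Cauchy estimates (shrinking the radius), while the simultaneous gain in $t$ furnishes the small constant. This is the abstract Cauchy--Kovalevskaya mechanism for Fuchsian operators, and it shows that the only solution vanishing to order $h$ within the analytic scale is $v\equiv 0$.

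Since the given $v$ is only a distribution in $x$ and merely continuous in $t$, the last step is to pass from the analytic category to distributions by a Holmgren-type duality argument. To show that $\langle v(t,\cdot),\varphi\rangle$ vanishes for $t$ near $0$ and all $\varphi\in C_0^\infty(\omega_0)$, I would pair $v$ against solutions $w$ of the adjoint Fuchsian problem $\mathcal{P}^* w = \varphi$, built analytically by the very same contraction scheme. The adjoint is again Fuchsian of the same form: since $\theta^* = -\theta-1$, its indicial roots are $\mu_j = -\lambda_j-1$ and the dual order condition is met, so the construction applies. Integrating by parts, all boundary contributions at $t=0$ cancel thanks to the vanishing Cauchy data $\partial_t^j v(0,\cdot)=0$ and the order threshold, leaving $\langle v,\varphi\rangle \equiv 0$; hence $v$ vanishes near $(0,x_0)$.

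The main obstacle is the Fuchsian Cauchy--Kovalevskaya estimate itself, that is, proving that $T$ (and the corresponding adjoint solution operator) is well defined and contracting in the scale of analytic Banach spaces: this is where the boundedness of $R = C(\theta)^{-1}$ --- guaranteed precisely by $\RE\lambda_j < h$ --- must be balanced quantitatively against the derivative loss of $P$, and where the analyticity of the coefficients is indispensable through Cauchy estimates. This analytic input cannot be dispensed with, consistent with the failure of unique continuation noted earlier for merely $C^\infty$ sums of squares.
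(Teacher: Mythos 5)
First, a point of comparison: the paper does not prove Theorem~\ref{Ba-Gou} at all --- it is quoted, as a particular case adapted to the situation at hand, from Baouendi--Goulaouic \cite{BG} (their Theorem~4). So your proposal can only be measured against the method of the cited source, and in outline it is faithful to it: rewriting $\mathcal{P}=C(\theta)+t^{2}P(x,\partial_x)$ with $\theta=t\partial_t$ and $C(\lambda)=(\lambda-\lambda_1)(\lambda-\lambda_2)$; observing that $\RE\lambda_j<h$ gives $C(k)\neq 0$ and $|C(k)|\gtrsim k^{2}$ for integers $k\geq h$, so that $C(\theta)^{-1}$ is a well-defined integral operator on functions vanishing to order $h$; running a Fuchsian Cauchy--Kovalevskaya fixed point in a scale of analytic norms in which the $k^{-2}$ gain compensates the two-derivative loss of $P$ (note the standard Ovsyannikov scheme handles a loss of one derivative, so this compensation must be made quantitative --- you correctly identify it as the main estimate); and finally a Holmgren-type duality to reach distribution solutions. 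Your computation of the adjoint indicial roots $\mu_j=-\lambda_j-1$ is correct, and the exponent arithmetic for the boundary terms at $t=0$ does work out ($v=O(t^{h})$, $w\sim t^{\mu_j}$, and $\RE(h+\mu_j+1)>0$). One small inaccuracy: $R=C(\theta)^{-1}$ does not gain powers of $t$ --- it maps $t^{k}\mapsto t^{k}/C(k)$, so all the $t$-gain in your operator $T$ comes from the explicit factor $t^{2}$; what $R$ contributes is the $|C(k)|^{-1}$ decay.

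The genuine gap is in your last step. You propose to solve $\mathcal{P}^{*}w=\varphi$ for $\varphi\in C_0^\infty(\omega_0)$ ``by the very same contraction scheme'', but that scheme lives in the analytic category: it cannot produce $w$ for non-analytic $\varphi$, and an analytic $\varphi$ cannot be compactly supported. This is precisely the classical crux of Holmgren's theorem, and it cannot be patched cheaply. One must (i) solve the adjoint Fuchsian problem for a dense family of analytic data (polynomials, say) with an existence slab $[0,\delta]\times\omega_1$ that is \emph{uniform} in the data --- this requires the quantitative form of the Fuchsian existence theorem, not merely the fixed-point statement; and (ii) cut off in $x$ and control the resulting lateral boundary terms, since $v(t)$ is only a distribution on $\omega_0$ and the pairing with a non-compactly-supported $w$ is otherwise undefined (this is where lens-shaped domains or deformation of surfaces enter, as in Holmgren--John). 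Be warned that the seemingly natural shortcut of iterating $v=-R(t^{2}Pv)$ directly against Gevrey test functions fails: after $n$ iterations one pays $((2n)!)^{\sigma}$ for $2n$ derivatives of a Gevrey-$\sigma$ test function against a gain $\prod_{j\leq n}|C(h+2j)|^{-1}\sim(2^{n}n!)^{-2}$, which balances only for $\sigma\leq 1$, i.e.\ analytic data --- so the approximation-with-uniform-domain argument is genuinely unavoidable. None of this invalidates your strategy, which is indeed that of \cite{BG}; but as written the final paragraph is a pointer to where the real work lies rather than an argument, and the paper itself sidesteps all of it by citing \cite{BG} directly.
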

\subsection{Proof of Theorem \ref{ucHo-s}}
Let $u$ be the solution of \eqref{ext-pb} given by \eqref{formule}. Recall that, $u\in C^0([0, +\infty), L^2(\xR^d))$. and by the condition in the Theorem,
\begin{equation}\label{trace0} 
 u\arrowvert_{t=0} = 0  \quad \text{ in }\omega. 
 \end{equation}
 Consequently, since  $P$  is an operator in $x$ whose  coefficients do not depend on $t,$
 \begin{equation}\label{trace0P} 
 Pu\arrowvert_{t=0} = 0  \quad \text{ in }\omega, 
 \end{equation}
 Let us set, 
 \begin{equation}\label{A=}
   A(t,x) = t^{1-2s} \partial_t u(t,x).
  \end{equation}
 Using \eqref{limite} and  the hypothesis in Theorem \ref{ucHo-s} we see that,
 \begin{equation}\label{limA}
 \lim_{t\to 0^+} A(t,x) = 0 \quad \text{ in }L^2(\omega)
 \end{equation}

\begin{equation}\label{limdA}
 \lim_{t\to 0^+} \partial_x^\alpha A(t,x) = 0 \quad \text{ in }\mathcal{D}'(\omega).
 \end{equation}
 Now according to \eqref{trace0} we can write, for $x \in \omega$,
  \begin{equation}\label{B}
  u(t,x) = t \int_0^1 (\partial_tu)(\lambda t)\, d \lambda = t^{2s} \int_0^1 \lambda^{2s-1} A(\lambda t,x)\, d \lambda: = t^{2s} B(t,x).
  \end{equation}
Since $2s-1  \in ]-1,1[$ we deduce from \eqref{limA} that,
\begin{equation}\label{limB}
 \lim_{t\to 0^+} B(t,x) = 0 \quad  \text{ in } L^2(\omega ).
 \end{equation}
 Set for $t>0, x \in \xR^d$, 
  \begin{equation}\label{v=}
  v(t,x) = t^{1-2s} u(t,x) = t B(t,x).
  \end{equation}
  then $v$ is a $C^\infty$ function in $]0, +\infty[ \times \xR^d$ which belongs to  $C^0([0, +\infty[, L^2(\xR^d))$ and satisfies,
  \begin{equation}\label{trace0bis} 
 v\arrowvert_{t=0} = 0  \quad \text{ in }L^2 (\omega). 
 \end{equation}
 Moreover,
 \begin{equation}\label{dtv}
   \partial_t v = t^{1-2s} \partial_t u  + (1-2s) t^{-2s} u = A + (1-2s) B.
   \end{equation}
 Therefore by \eqref{limA} and \eqref{limB} we deduce that,
 \begin{equation}\label{limdtv}
 \lim_{t\to 0^+} \partial_t v(t,x) = 0, \text{ in } L^2( \omega).
 \end{equation}
On the other hand, using \eqref{A=}, the equation and \eqref{B}, we can write for $t>0$,
\begin{align*}
 \partial_t A(t,x) &= t^{-2s}\big( t\partial_t^2 u + (1-2s)  \partial_t u)= -t^{-2s+1} Pu,\\
 &= t^{-2s + 1}P(t^{2s} B). = t PB.
 \end{align*}
It follows then from \eqref{limB} that,
\begin{equation}\label{limdtA}
 \lim_{t\to 0^+} \partial_t A(t,x) = 0, \text{ in } \mathcal{D}'( \omega).
\end{equation}
From \eqref{B} we get,
$$\partial_t B = \int_0^1 \lambda^{2s} \partial_t A( \lambda t, x)\, d \lambda,$$
therefore,
\begin{equation}\label{limdtB}
 \lim_{t\to 0^+} \partial_t B(t,x) = 0, \text{ in } \mathcal{D}'( \omega).
\end{equation}
Using \eqref{dtv} we deduce,
\begin{equation}\label{limdt2v}
\lim_{t \to 0^+} \partial_t^2 v(t, x) =  \lim_{t \to 0^+} \big(\partial_t A(t,x) + (1-2s)  \partial_t B(t,x)\big) = 0, \text{ in }\mathcal{D}'( \omega).
\end{equation}
Therefore,   $v$ can be extended as a function in $C^2([0, +\infty), \mathcal{D}'(\omega))$   with, 
\begin{equation}\label{vC2}
 v \arrowvert_{t=0} = \partial_t v \arrowvert_{t=0}= \partial^2_t v \arrowvert_{t=0}=0 \text{ in } \mathcal{D}'(\omega).
\end{equation}
Let us now check the equation satisfied by $v$ on $]0, +\infty[ \times \xR^d$.  Since $ u = t^{2s-1} v$ we have,
\begin{align*}
\partial_t u &= (2s-1) t^{2s-2} v + t^{2s-1} \partial_t v,\\
 \partial^2_t u &=  (2s-1) (2s-2)t^{2s-3} v + 2(2s-1)t^{2s-2} \partial_t v + t^{2s-1} \partial^2_t v,\\
 tPu &= t^{2s} Pv.
\end{align*}
It follows that,
$$ 0 = t \partial^2_t u + (1-2s) \partial_t u + t Pu = t^{2s-2}\big(t^2 \partial^2_t v + (2s-1)t \partial_t u -(2s-1) v  + t^2 Pv\big),$$
so that,
\begin{equation}\label{eqv}
t^2 \partial^2_t v + (2 s-1)t \partial_t v -(2s-1) v  + t^2 Pv =0.
\end{equation}
Setting $\widetilde{v} = H(t) v$ where $H(t) = 1 $ if $t>0,$ $H(t) = 0$ if $t<0$ we deduce from \eqref{vC2} that $\widetilde{v} $  belongs to $C^0(\xR, \mathcal{D}'(\omega))$ and is such that $t^{-2}\widetilde{v} \in     C^0(\xR, \mathcal{D}'(\omega)).$ Moreover $\widetilde{v} $  satisfies the same equation,
\begin{equation}\label{eqvtilde}
t^2 \partial^2_t \widetilde{v} + (2 s-1)t \partial_t \widetilde{v} -(2s-1) v  + t^2 P\widetilde{v} =0,  \text{ in } \mathcal{D}'(\xR\times \omega).
\end{equation}
We may apply Theorem \ref{Ba-Gou}. with $a_1 =  2s-1 , a_2 = 1-2s$. The characterisic equation is,
$$ \lambda( \lambda-1) +  (2s-1)\lambda + 1-2s = \lambda^2    + 2(s-1) \lambda + 1-2s = 0$$
whose roots are $\lambda_1 = 1, \lambda_2 = 1-2s$. We can take $h = 2$ in order that $\lambda_j <h$.

 The conclusion of Theorem \ref{Ba-Gou} is then that  there exist  $\delta >0 $ and an open set $\omega_1$ with $\overline{\omega_1} \subset \omega$ such that $ \widetilde {v}$ vanishes    in $]-\delta, \delta[ \times \omega_1$. It follows from the definition of $\widetilde{v}$ that the function $u$ vanishes identically in  $\mathcal{O}:= ]0, \delta[ \times \omega_1$.  But  in $ ]0, +\infty[ \times \xR^d$ the operator 
 $$\mathcal{P} = \partial_t^2 + \frac{1-2s}{t} \partial_t - \sum_{j=1}^r X_j^* X_j$$
 is an operator "sum of squares" with analytic coefficients and $u$ is a solution of the equation $\mathcal{P} u = 0$ in $ ]0, +\infty[ \times \xR^d$ which vanishes in $\mathcal{O}$. Bony's theorem (see Theorem \ref{Bony}) shows that $u$ vanishes identically in $ ]0, +\infty[ \times \xR^d$, therefore $ \varphi = u\arrowvert_{t=0}$ vanishes identically in $\xR^d$.
 \section{Appendix}
\subsection{Proof of Theorem \ref{cha-ja}}
As written above, the following proof is taken from  to \cite{CJ}.
Notice first that when $\varphi \in L^2(\xR^d)$ the function $e^{-\tau P} \varphi,$ for $\tau >0,$ belongs to the domain of $P^s$ (in fact to the domain of $P^\rho$ for every $\rho >0$),  so $  P^s e^{-\tau P}(\varphi)$ is well defined.  Indeed, since $\lambda^{2s} e^{-2\tau \lambda} \leq c(\tau,s) $ for every $\lambda>0$ we have,
$$\int_0^{+\infty} \lambda^{2s}e^{-2\tau \lambda}\, d\big( E(\lambda)\varphi , \varphi\big)  \leq c(\tau,s)\Vert \varphi\Vert^2_{L^2(\xR^d)}.$$
Now let us show that for $\varphi \in L^2(\xR^d)$ the function $u$ is well defined and belongs to $C^0([0, +\infty), L^2(\xR^d))$. Indeed we have,
$$u(t,x)  = \frac{1}{\Gamma(s)}\int_0^{+\infty} \lambda^s \int_0^{+\infty} e^{-\tau \lambda} e^{-\frac{t^2}{4\tau}} \frac{1}{\tau^{1-s}}\, d\tau\,  dE(\lambda) \varphi.$$
In the integral in $\tau$ let us set $\lambda \tau = \mu$. We obtain,
\begin{equation}\label{newu}
 u(t,x) =  \int_0^{+\infty} \theta(\lambda,t) dE(\lambda) \varphi, \quad \theta(\lambda,t)  = \frac{1}{\Gamma(s)}\int_0^{+\infty} e^{-\mu} e^{-\frac{\lambda\,t^2}{4\mu}}\frac{d\mu}{\mu^{1-s}}
 \end{equation}
  Since $ \frac{e^{-\mu}}{\mu^{1-s}} \in L^1((0, +\infty))$ it is easy to see that, for fixed $\lambda,$ the function $t\mapsto  \theta(\lambda,t) $ is continuous. Let $t_0\in [0, +\infty)$ and $(t_j)\subset [0, +\infty)$ a sequence converging to $t_0$. Then,
$$\Vert u(t_j , \cdot) - u(t_0, \cdot) \Vert^2_{L^2(\xR^d)} =  \int_0^{+\infty}\vert  \theta(\lambda,t_j) -\theta(\lambda,t_0)\vert^2 d(E(\lambda)\varphi, \varphi).$$
Since $\theta(\lambda,t) \leq 1$ and $\varphi \in L^2(\xR^d)$ the Lebesgue theorem shows that the right hand side tends to zero when $j$ goes to $ +\infty$.

 Now, using the fact (proved by induction on $k\in \xN$) that,
   $t^k\partial_t^ke^{-\alpha t^2}  = \sum_{j=1}^k c_{jk}(\alpha t^2)^je^{-\alpha t^2},$ $\alpha>0$,    $c_{jk} \in \xR,$  we see that,
\begin{equation}\label{newdku}
t^k\partial^k_t u(t,x) =    \int_0^{+\infty} \theta_k(\lambda,t) dE(\lambda) \varphi, \quad \theta_k(\lambda,t)  = \frac{1}{\Gamma(s)} \sum_{j=1}^k d_{jk}\int_0^{+\infty}e^{-\mu} \big(\frac{\lambda\,t^2}{4\mu}\big)^j e^{-\frac{\lambda\,t^2}{4\mu}}\frac{d\mu}{\mu^{1-s}}.
\end{equation}
Since $ \vert \theta_k(\lambda,t) \vert \leq M_k, M_k \in \xR^+,$ the same argument as before shows that $t^k \partial_t^ku \in C^0([0, +\infty[, L^2(\xR^d))$. Thus $\partial_t^ku \in C^0(]0, +\infty[, L^2(\xR^d))$.

Suppose   we have shown that   $u$  satisfies equation \eqref{ext-pb}, then  $u$  is a $C^\infty$ function on $]0, +\infty[ \times \xR^d$.  Indeed the operator appearing in \eqref{ext-pb} is an operator "sum of squares" with $C^\infty$ coefficients, which by hypothesis satisfies H\" ormander's condition \eqref{lie}. It is then hypoelliptic. So we are left with the proof of \eqref{ext-pb}.

According to the computation made before we see that,
\begin{align*}
 \partial_t^2 u + \frac{1-2s}{t}\partial_t u &= \frac{1}{\Gamma(s)} \int_0^{+\infty}\int_0^{+\infty}e^{-\mu}\lambda \big( -\frac{1-s}{2\mu} + \frac{\lambda t^2}{4 \mu^2}\big)e^{-\frac{\lambda\,t^2}{4\mu}} \frac{1}{ \mu^{1-s}}\, d\mu\, dE(\lambda)\varphi,\\
 &=\frac{1}{\Gamma(s)}  \int_0^{+\infty}\int_0^{+\infty}\lambda  \, e^{-\mu}\frac{\partial}{\partial \mu}\big[e^{-\frac{\lambda\,t^2}{4\mu}}\frac{1}{ \mu^{1-s}}\big]\, d\mu\, dE(\lambda)\varphi,\\
 &=  \frac{1}{\Gamma(s)}  \int_0^{+\infty}\int_0^{+\infty}\lambda \, e^{-\frac{\lambda\,t^2}{4\mu}}\frac{1}{ \mu^{1-s}}\, d\mu\, dE(\lambda)\varphi.
 \end{align*}
Setting $\mu = \lambda \tau$ we deduce that,
\begin{align*}
 \partial_t^2 u + \frac{1-2s}{t}\partial_t u &= \frac{1}{\Gamma(s)} \int_0^{+\infty}\int_0^{+\infty} \lambda^{s+1} e^{-\lambda \tau} e^{-\frac{t^2}{4 \tau}}\frac{1}{\tau^{1-s}}\, d\tau \, dE(\lambda)\varphi,\\
 &= \frac{1}{\Gamma(s)} \int_0^{+\infty} e^{-\tau P}(P^{s+1}\varphi) \, e^{-\frac{t^2}{4 \tau}}\frac{1}{\tau^{1-s}}\, d\tau = Pu.
\end{align*}
Let us prove \eqref{limite}. According to \eqref{newu} we have,
$$t^{1-2s} \partial_t u  = \frac{1}{\Gamma(s)} \int_0^{+\infty}\int_0^{+\infty}e^{-\mu} \frac{-\lambda t}{2 \mu} e^{-\frac{\lambda t^2}{4 \mu}} \frac{t^{1-2s}}{\mu^{1-s}}\, d\mu\, dE(\lambda)\varphi.$$
Setting $x = \frac{\mu}{\lambda t^2}$ we get,
$$t^{1-2s} \partial_t u = \frac{-1}{2\Gamma(s)}  \int_0^{+\infty}\int_0^{+\infty} e^{- \lambda x t^2} \lambda^s \frac{e^{- \frac{1}{4x}}}{x^{2-s}}\, dx \,dE(\lambda) \varphi. $$
Since $s<1$, we have $\frac{e^{- \frac{1}{4x}}}{x^{2-s}} \in L^1((0,+\infty)).$ Set $D(s) = \int_0^{+\infty} \frac{e^{- \frac{1}{4x}}}{x^{2-s}}\, dx $  and $C(s) = -\frac{D(s)}{2\Gamma(s)}.$ Now since $P^s \varphi \in L^2(\xR^d)$ we can write, $P^s \varphi = \int_0^{+\infty} \lambda^s dE(\lambda) \varphi$ so that,
$$t^{1-2s} \partial_t u -  C(s) P^s \varphi =  \frac{-1}{2\Gamma(s)} \int_0^{+\infty}\lambda^s \Big(\int_0^{+\infty} (e^{- \lambda x t^2} -1)\frac{e^{- \frac{1}{4x}}}{x^{2-s}}\, dx \Big) \, dE(\lambda) \varphi.$$
Set $f(\lambda,t) = \int_0^{+\infty} (e^{- \lambda x t^2} -1)\frac{e^{- \frac{1}{4x}}}{x^{2-s}}\, dx. $ By the dominated convergence theorem we have, for fixed $\lambda \in (0, +\infty),$ $\lim_{t\to 0^+} \vert f(\lambda,t)\vert = 0$ and moreover $\vert f(\lambda,t)\vert \leq 2D(s).$ Then,
$$\Vert t^{1-2s} \partial_t u -  C(s) P^s \varphi \Vert^2_{L^2(\xR^d)} = \frac{1}{4\Gamma(s)^2}  \int_0^{+\infty} \lambda^{2s} \vert f(\lambda,t)\vert^2 d\big(E(\lambda)\varphi, \varphi),$$
and,  by the dominated convergence theorem, the right hand side tends to zero when $t$ goes to $0^+.$

\end{document}